\documentclass[11pt,twoside]{article}

    \usepackage[left=1in,top=1in,right=1in,bottom=1in,head=.1in]{geometry}
    \usepackage{graphicx}
    \usepackage{xy}    
    \usepackage{bm}
    \xyoption{all}
    \usepackage{url}
    \usepackage[linktocpage]{hyperref}
    \usepackage{styleset}
    \usepackage{mymacros}
    \pagestyle{numbered}
    \renewcommand\thefootnote{\arabic{footnote}}
\newcommand{\vol}{\operatorname{vol}}
    \title  {Equivariant hyperbolization of \,$3$-manifolds \\ \qquad 
    via homology cobordisms}

    \author{Dave Auckly\footnote{Supported by an AIM SQuaRE 
    $^2$\,Partially supported by Simons Foundation grant 585139, and National Science Foundation Grant 1952755
    $^3$\,Supported by BK21 PLUS SNU Mathematical Sciences Division and NRF Grant 2015R1D1A1A01059318  
    $^4$\,Partially supported by National Science Foundation Grant 1506328}$^{,2}$, Hee Jung Kim$^{1,3}$, Paul Melvin$^1$ and Daniel Ruberman$^{1,4}$
    }

    \address {Kansas State University, Manhattan KS 66506 \\
    Western Washington University, Bellingham WA 98225 \\
    Bryn Mawr College, Bryn Mawr PA 19010 \\
    Brandeis University, Waltham MA 02454 \\[.5ex]} %%%% DR to prevent last bit of address being shaved off.

\begin{document}

\parskip 2pt

\maketitle

\vskip -40pt            

\begin{abstract}
The main result of this paper is that any $3$-dimensional manifold with a finite group action is equivariantly invertibly homology cobordant to a hyperbolic manifold; this result holds with suitable twisted coefficients as well.  The following two consequences motivated this work.  First, there are hyperbolic equivariant corks (as defined in previous work of the authors) for a wide class of finite groups. Second, any finite group that acts on a homology $3$-sphere also acts on a hyperbolic homology $3$-sphere. The theorem has other corollaries, including the existence of infinitely many hyperbolic homology spheres that support free $\bz_p$-\,actions that do not extend over any contractible manifolds, and (from the non-equivariant version of the theorem) infinitely many that bound homology balls but do not bound contractible manifolds.  In passing, it is shown that the invertible homology cobordism relation on $3$-manifolds is antisymmetric, and thus a partial order.
\end{abstract}

\renewcommand*{\thefootnote}{\fnsymbol{footnote}}
\newcommand{\foot}[1]{\setcounter{footnote}{1}\footnote{\ #1}}
            
%%%%%%%%%%%%%%%%%%%%%%%%%%%%%%%%%%%%%
%%%%%%%%  SECTION 1 : INTRODUCTION  %%%%% %%%%%%%
%%%%%%%%%%%%%%%%%%%%%%%%%%%%%%%%%%%%%

\section{Introduction}\label{S:introduction}

The group of smooth homology cobordism classes of oriented 3-dimensional integral homology spheres has a complicated structure that is not fully understood, despite many advances coming from $4$-dimensional gauge theory; see for example \cite{fs:pseudofreeorbifolds,furuta:cobordism}.  This group appears in the theory of higher-dimensional manifolds, and also features prominently in the study of smooth $4$-manifolds.  The Rohlin invariant gives an epimorphism to $\bz_2$, and for a while this was all that was known about this group.  With the advent of gauge theory techniques \cite{donaldson:4manifolds}, it was shown to be infinite \cite{fs:pseudofreeorbifolds} (e.g.\ it is an easy consequence of Donaldson's diagonalization theorem \cite{donaldson:orientation} that the Poincare homology sphere represents an element of infinite order), indeed infinitely generated \cite{furuta:cobordism,FS:cob}. There have been many results since on the structure of this group, and on its applications, including Manolescu's spectacular resolution of the triangulation conjecture \cite{man}. 

It is interesting to explore how homology cobordism interacts with geometric structures on $3\text{-manifolds}$.  For example, there exist homology 3-spheres that are not homology cobordant to Seifert fibered homology spheres~\cite{stoffregen:seifert}, although the question of whether Seifert fibered spaces {\sl generate} the homology cobordism group is still unsolved.  In contrast, Myers~\cite{Myers2} proved that every $3$-manifold is homology cobordant to a hyperbolic manifold, and this result was later refined by Ruberman~\cite{ruberman:seifert} to show that such cobordisms can be taken to be invertible; the latter result has been applied to construct exotic smooth structures on contractible 4-manifolds \cite{akbulut-ruberman:absolute}.   

In this paper we show that any 3-manifold $M$ with a finite group action is equivariantly invertibly homology cobordant, with twisted coefficients, to a hyperbolic manifold.  The statement about twisted coefficients refines the earlier work of Myers and Ruberman, even in the non-equivariant setting: it shows that all the covering spaces of the constructed cobordism corresponding to subgroups of $\pi_1(M)$ are also homology cobordisms. As will be seen, this result has applications to 4-dimensional smooth topology and to the 3-dimensional space form problem, and may also be of interest in spectral geometry (cf.\,\cite{bartel-page:hyperbolic}).    

Throughout we work in the category of smooth, compact, oriented manifolds.  All group actions are assumed to be effective and orientation preserving, and all homology groups are assumed to have integer coefficients unless indicated otherwise.   

To state our main result, recall that a {\em homology cobordism} is a cobordism whose inclusions from the ends induce isomorphisms on homology.  For non-simply connected manifolds there is a stronger notion of {\it homology cobordism with twisted coefficients} in any module over the group ring of the fundamental group of the cobordism.  Also recall (see e.g.\ \cite{sumners:inv1,sumners:inv2,ruberman:seifert}) that a cobordism $P$ from $M$ to $N$ is {\em invertible} if there is a cobordism $Q$ from $N$ to $M$ with $P \cup_{N} Q \cong M\times I$. In this setting, there is a surjection from $\pi_1(P)$ to $\pi_1(M)$, and this surjection is implied when we talk about the homology of $P$ with coefficients in $\bz[\pi_1(\M)]$. See \secref{tech} for details, and the Appendix for a proof that invertible cobordism is a partial order on $3$-manifolds.

%%%%%% Theorem A %%%%%%%%
\begin{thm}\label{T:hypthm}
Any closed $3$-manifold $\M$ equipped with 
an action of a finite group $G$ is equivariantly invertibly $\bz[\pi_1(\M)]$-homology cobordant to a hyperbolic $3\text{-manifold}$ $N$ with 
a $G$-action by isometries.   This cobordism may be chosen to be a product along a neighborhood of the singular set of the action. 
 \end{thm}
%%%%%%%%%%%%%%%%%%%%
 
We were led to this theorem by a question in 4-dimensional smooth topology.  Consider a finite group $G$ that acts on the boundary of some compact contractible $4\text{-dimensional}$ submanifold of $\br^4$, for example any finite subgroup of $SO(4)$.  In a recent paper~\cite{auckly-kim-melvin-ruberman:equivariant-cork} we constructed a compact contractible $4$-manifold $C$ with a $G\text{-action}$ on its boundary and an embedding of $C$ in a closed $4\text{-manifold}$ $X$ such that removing $C$ from $X$ and regluing by distinct elements of $G$ yields distinct smooth $4\text{-manifolds}$; related results were obtained by Tange~\cite{tange:cycliccorks} for $G$ finite cyclic and Gompf~\cite{gompf:infinite-cork} for $G$ infinite cyclic.   We call such a gadget a {\it $G$-cork}.  In our construction $\del C$ is reducible, and it is natural to ask if there exist $G\text{-corks}$ with irreducible or even hyperbolic boundaries; we call these {\it hyperbolic $G\text{-corks}$}.  Tange~\cite{tange:primecorks} has recently shown that his cyclic corks have irreducible boundaries, and (by computer calculations with HIKMOT~\cite{hikmot}) that some are hyperbolic. As a consequence of \thmref{hypthm}, proved in Sections 3--5, we will deduce: 

%%%%%% Corollary B %%%%%%%%
\begin{cor}\label{C:hypcork}
There exist hyperbolic $G$-corks for any finite group $G$ that acts on the boundary of some compact contractible $4$-dimensional submanifold of $\br^4$.
\end{cor}
%%%%%%%%%%%%%%%%%%%%

The proof of this corollary will be given in \secref{corks}, along with the following applications to low dimensional topology. We start with a hyperbolic version of a non-extension result for group actions due to Anvari and Hambleton~\cite{anvari-hambleton:contractible}.

%%%%%% Corollary C %%%%%%%%
\begin{cor}\label{C:hyperextend}
For any Brieskorn homology sphere $\Sigma(a,b,c)$ and prime $p \nmid abc$, there is a hyperbolic homology sphere $N(a,b,c)$ with a free $\bz_p$-\,action that is $\bz_p$-\,equivariantly homology cobordant to the standard action on $\Sigma(a,b,c)$, and that does not extend smoothly  over any contractible $4$-manifold that $N(a,b,c)$ might bound. 
\end{cor}
%%%%%%%%%%%%%%%%%%%%

We then apply \thmref{hypthm} in a non-equivariant setting to show that the difference between bounding an acyclic and contractible $4$-manifold occurs for hyperbolic homology spheres. 

%%%%%% Corollary D %%%%%%%%
\begin{cor}\label{C:acyclic}
There are infinitely many hyperbolic homology spheres that bound homology balls but do not bound contractible manifolds.
\end{cor}
%%%%%%%%%%%%%%%%%%%%

The class {$\calg$} of all finite groups that act on homology 3-spheres (hyperbolic or not) include the finite subgroups of $SO(4)$ and some generalized quaternion groups of period 4 (as shown by Milgram~\cite{milgram:swan} and Madsen~\cite{madsen:spaceforms}; see also \cite[p.xi]{davis-milgram:survey}, \cite{kirby:problems96}).  It has been an open question since the early 1980s to determine exactly which groups lie in $\calg$, and to say something about the geometric nature of the homology spheres on which they act.  \thmref{hypthm} sheds light on this last question, especially for free actions.  

%%%%%% Corollary E %%%%%%%%
\begin{cor}\label{C:hyphomsphere}
Any finite group that acts on a homology $3$-sphere also acts on a hyperbolic homology $3$-sphere, with equivalent fixed-point behavior $($i.e.\ the two actions are equivariantly diffeomorphic near their fixed point sets$)$.  In particular, there exist finite groups that are not subgroups of $SO(4)$ $($so by geometrization do not act freely on $S^3)$ that act freely on some hyperbolic homology $3$-sphere. \end{cor}
%%%%%%%%%%%%%%%%%%%%

\begin{remark*} Our results, in particular \thmref{hypthm} and \coref{hyphomsphere}, are related to a recent paper of Bartel and Page~\cite{bartel-page:hyperbolic} that constructs an action of an arbitrary finite group $G$ on a hyperbolic $3$-manifold $\M$ whose induced action on $H_1(\M;\bq)$ realizes any given finitely generated $\bq[G]$ module.  Indeed both our paper and theirs construct actions of finite groups on $3$-manifolds with prescribed homological action. However, neither paper implies the results of the other; for instance~\cite{bartel-page:hyperbolic} deals only with the action on rational homology, and does not provide a homology cobordism. On the other hand, our hyperbolization requires the existence of an action on some $3$-manifold as a starting point. It would be of interest to establish a sharper result realizing a given $\bz[G]$ module (even one with $\bz$ torsion) by an action on some $3$-manifold; our hyperbolization procedure would then produce such an action on a hyperbolic manifold. 

\coref{hyphomsphere} is also related to work of Cooper and Long~\cite{cooper-long:qhs} that constructs a free action of any finite group $G$ on a {\it hyperbolic} rational homology sphere.  It seems to have been known previously -- as pointed out to us by Jim Davis -- that the surgery-theoretic methods of~\cite{browder-hsiang:problems} give such an action on some (not necessarily hyperbolic) rational homology sphere; combining this observation  with \thmref{hypthm} thus gives an alternative proof of the Cooper-Long result.
\end{remark*}

In outline, the proof of \thmref{hypthm} is similar to the proofs of the analogous theorems in~\cite{Myers2} and \cite{ruberman:seifert}.  Start with a Heegaard splitting of $M$ of genus at least $2$ with gluing map $h$.  Then replace each handlebody, viewed as the exterior of a trivial tangle in the 3-ball, with the exterior of an invertibly null-concordant hyperbolic tangle.  To build the cobordism, glue the two concordances together by the map $h\times\id$. The top of the cobordism will be shown to be hyperbolic via the gluing techniques underlying Thurston's hyperbolization theorem~\cite{morgan,mcmullen:geometrization}.

To make this construction $G$-equivariant requires some modifications of this argument, even for free actions.  One starts with a Heegaard splitting of the orbit space $\M/G$, and then replaces the handlebodies with copies of the tangle exterior as in the outline above.  This gives an invertible cobordism from $\M/G$ to a hyperbolic $3\text{-orbifold}$.  Now if the action is free, then the induced $G$ cover is an invertible cobordism $P$ from $\M$ to a hyperbolic 3-manifold $\Mh$ that is equivariant with respect to the $G$-action.  However, there is no reason that $P$ should be a {\sl homology} cobordism, or indeed that  $\Mh$  should have the same homology as $\M$.  The issue is that while the tangle exteriors are $\bz$-homology equivalent to handlebodies, they are not necessarily homology equivalent with arbitrary (in this case $\bz[G]$) coefficients. This is of course familiar from knot theory; a covering space of a homology circle such as a knot complement need not be a homology circle.  The resolution of this issue is to further decompose each handlebody into 0 and 1-handles.  These handles will be replaced with {\it fake} $0$ and $1$-handles that will be hyperbolic tangle exteriors.  These are no longer homology handles, but rather homology handlebodies, but now one has control over their lifts.  

We will begin with some standard tangle exteriors, referred to as {\it atoms}, then glue these together by a bonding process to make the fake handles, and finally glue these fake handles together to make fake handlebodies and relative cobordisms. This localization will ensure that the replacement is homology cobordant to a handlebody $\H$ (with coefficients in $\bz[\pi_1(\H)]$) and again Thurston's gluing theorem will be used to create a closed hyperbolic manifold.  With some additional work, this argument extends to the case when $G$ has some fixed points. In this setting the quotient $\M/G$ will be an orbifold, and we will essentially be working with an orbifold Heegaard splitting.  

In our proof we will need tangles that are {\it simple} (a.k.a.\ {\it hyperbolic}) and {\it doubly-slice}, and that retain these properties as they are suitably glued together.  These notions will be made precise in \S2.  A basic example is the $4$-component boundary tangle $\R_4$ in the 3-ball displayed in \figref{tangles}a, with Seifert surface $\F_4$ shown in \figref{tangles}b.  Its $n$-component generalization $\R_n$, with Seifert surface $\F_n$, is the lift of the {\it generating arc} $\R \subset B^3$ shown in \figref{tangles}c to the $n$-fold branched cover along the {\sl axis} $\A$ perpendicular to  the page.  Note that the endpoints of $\R_n$ all lie on the equator $\calo_n$ of $B^3$ lying in the page, linking $\A$ once.  We will refer to the $\R_n$ for $n\ge2$ as {\it atomic tangles}, and to the $\calo_n$ as their {\it atomic orbits}. The atomic tangles were the key players in the last author's construction of invertible homology cobordisms in \cite{ruberman:seifert}.   
\begin{remark} \label{R:disjoint}
Since $\R$ and $\A$ are disjoint, so is each $\R_n$ and (the lift of) $\A$; this will ultimately be the reason why the cobordism in \thmref{hypthm} is a product near the singular set.  
\end{remark}

%%%%%%%%%% FIG 1 %%%%%%%%%%%%%
\fig{100}{FigTangles}
{\put(-380,-20){\small a) The tangle $\R_4$ and its orbit $\calo_4$ \hskip 15pt b) The Seifert surface $\F_4$ \hskip 25pt c) The generating arc $\R$}
\put(-54,5){\small$\R$}
\put(-54,54){\small$\A$}
\put(-192,54){\small$\A$}
\put(-323,54){\small${\A}$}
\put(-192,8){\small${\F_4}$}
\put(-325,8){\small${\R_4}$}
\put(-360,90){\small${\calo_4}$}
\caption{Atomic tangles in the $3$-ball}
\label{F:tangles}
\adjust[10]}
%%%%%%%%%%%%%%%%%%%%%%%%%%%

The following technical proposition, extracted from the proof of Theorem 2.10 in \cite{ruberman:seifert}, establishes the desired properties of the tangles $\R_n$.  
\begin{proposition}\label{P:rub}
The atomic tangles $\R_n\subset B^3$ are  {\,\small\bf a)} simple for all $n\ge 3$.\foot{In fact $\R_2$ is simple as well; see \remref{R2}.} and {\,\small\bf b)} doubly-slice for all $n$.
\end{proposition}

\noindent The proof is reviewed in the next section in the process of analyzing the more complicated tangles that arise in our constructions.
\begin{acknowledgement*}
The authors thank Ian Hambleton for input regarding the $3$-dimensional space form problem, and Jeff Meier for a helpful communication about the boundary of the Mazur manifold.
\end{acknowledgement*}
%%%%%%%%%%%%%%%%%%%%%%%%%%%%%%%%%%%%%
%%%%%%%% SECTION 2 : TECHNICAL BACKGROUND %%%%%%%
%%%%%%%%%%%%%%%%%%%%%%%%%%%%%%%%%%%%%

\section{Technical background}\label{S:tech}

%%%%%%%%%%%%%%%%%%%%%%%%%%%%%%%%%%%%%
\head{Marked Tangles}
%%%%%%%%%%%%%%%%%%%%%%%%%%%%%%%%%%%%%

In this paper, a {\it tangle} $T$ in a 3-manifold $M$ will refer to a finite disjoint union of proper arcs in $M$, called the {\it strands} of $T$.  Thus closed loops are not allowed.  We also assume that the endpoints of any given strand lie in the {\sl same} component of $\del M$.  A {\it marking} of $T$ is a disjoint collection $A$ of arcs in $\del M$ that join each strand's endpoints.  This gives a way to close $T$ into a link $T\cup A$ with the same number of components.  The pair $(T,A)$ is called a {\it marked tangle}, and $T\cup A$ its {\it associated link}.  Two marked tangles $(S,A)$ and $(T,B)$ in $M$ are considered {\it equivalent} if $(M,S\cup A)$ and $(M,T\cup B)$ are pairwise diffeomorphic.  

Tangle markings are not generally unique.   For example, two distinct markings $A_0$ and $A_1$ for the atomic tangle $\R_4 \subset B^3$ from \figref{tangles} are shown in \figref{markedtangles}.  In this picture, the part of the tangle inside the ball is drawn in muted tones.  In subsequent pictures we will omit these interior strands, but will always either draw the markings on the surface, or indicate in some other way where they lie.

\begin{remark}
The {\it canonical marking} $A_0$ is the unique marking of $\R_4$ that lies in its atomic orbit $\calo_4$.  Its position on the boundary of a $3$-ball $B$ in which $\R_4$ lies can be specified by drawing $\calo_4 \subset \del B$ marked with one point from each component of $A_0$.  The analogous marking of any atomic tangle $\R_n$, also denoted $A_0$  and called its {\it canonical marking}, can likewise be specified by drawing $\calo_n\subset\del B^3$ marked with $n$ points.

%%%%%%%%%% FIG 2 %%%%%%%%%%
\fig{80}{FigMarkings}{
\put(-193,-20){\small a) $A_0$ (canonical)  \hskip 75pt b) $A_1$}
\caption{Two markings for the atomic tangle $\R_4$}
\label{F:markedtangles}
\adjust[10]}
%%%%%%%%%%%%%%%%%%%%%%%

At the end of this section, we will build some more complicated marked tangles called {\it $K$-molecules}  by assembling canonically marked atomic tangles $\R_n$ lying in copies of the $3$-ball.  Each $3$-ball will be viewed as a product $D^2\times D^1$ with axis $\A = \{0\}\times D^1$.  We will assume that the orbit $\calo_n$ is transverse to the circles $C_\pm = \del D^2\times\{\pm1\}$, and is marked with the points in $\calo_n\cap(C_+\cup C_-)$.  It follows that $n = |\calo\cap(C_+\cup C_-)|$ will always be even.
 \end{remark}  

\begin{defn}\label{D:index}
The {\it index} of any canonically marked atomic tangle $\R_n\subset D^2\times I$ as above is the pair of geometric intersection numbers $(n_+,n_-) = (|\calo_n\cap C_+|,|\calo_n\cap C_-|)$.  Thus $n_++n_- = n$, and each tangle strand runs from the cylinder $\del D^2\times D^1$ on the side to the top or bottom boundary disk, straddling $C_+$ or $C_-$.  
\end{defn}

\begin{remark}\label{R:lifts}
The lift of an $\R_n\subset D^2\times D^1$ of index $(n_+,n_-)$ to the $k$-fold cyclic cover of $D^2\times D^1$ branched along $\A$ is an $\R_{kn}\subset D^2\times D^1$ of index $(kn_+,kn_-)$.
\end{remark}

For our purposes, only tangles of index $(n,0)$ or $(n/2,n/2)$ will arise; their strands will either all straddle the top circle $C_+$, or half straddle $C_+$ and half straddle $C_-$.  See \figref{handletangles} for two representative examples.  

%%%%%%%%%% FIG 3 %%%%%%%%%%
\fig{60}{FigHandles}{
\put(-325,-17){\small a) \ $\R_4$ of index $(4,0)$ \hskip 148pt b) \ $\R_8$ of index $(4,4)$}
\put(-48,25){$\A$}
\put(-280,25){$\A$}
\put(-90,27){$\calo_8$}
\put(-246,24){$\calo_4$}
\caption{Tangles in $D^2\times I$}
\label{F:handletangles}
\adjust[10]}
%%%%%%%%%%%%%%%%%%%%%%%

Several classes of marked tangles are relevant to our discussion, including (in increasing generality) {\it trivial, elementary}, {\it doubly-slice} and {\it boundary tangles}.  Here are the precise definitions of all but the doubly-slice class, postponed until later in this section:

\begin{defn}\label{D:elementary}
A marked tangle $(T,A)$ in a $3$-manifold $M$ is {\it trivial} if its associated link $T\cup A$ is an unlink.  More generally $(T,A)$ is called a {\it boundary tangle} if $T\cup A$ is a boundary link, meaning its components bound disjoint surfaces in $M$.  The union of these surfaces, positioned to meet $\del M$ in $A$, is then called a {\it Seifert surface} for $(T,A)$ with {\it inner boundary} $T$ and {\it outer boundary} $A$.
In particular, if $(T,A)$ has a Seifert surface $F$ with a {\it good basis}, meaning embedded curves $\alpha_1,\beta_1,\dots,\alpha_n,\beta_n$ representing a basis for $H_1(F)$, disjoint except for a single point in $\alpha_i \cap\beta_i$ for each $i$, and satisfying the two properties
\items
\item[{\small\bf a)}] the $\alpha_i$ bound disjoint disks in $M$ whose interiors intersect $F$ only in arcs transverse to the $\beta$ curves 
\item[{\small\bf b)}] the $\beta_i$ bound disjoint disks in $M$ whose interiors intersect $F$ only in arcs transverse to the $\alpha$ curves
\stopitems
then $(T,A)$ is called an {\it elementary tangle}.   
\end{defn}

The markings in this definition are essential.\foot{Classically, an unmarked tangle $T$ is called a boundary tangle if $(T,A)$ is a boundary tangle in the sense of \defref{elementary} for {\sl some} marking $A$ of $T$.  Not all tangles are boundary tangles in this classical sense; for example the invariants $I^n$ from~\cite{cochran-ruberman:tangles} give obstructions, and these are all realized using~\cite{cochran:gilc}.}  For example $(\R_4,A_1)$ is not a boundary link, as its linking matrix is nonzero, while $(\R_4,A_0)$ and more generally all the canonically marked atomic tangles $(\R_n,A_0)$ are; they have the obvious Seifert surfaces $\F_n$ of genus one components, illustrated for $n=4$ in \figref{tangles}b.   In fact letting $\alpha_i$ (resp.\ $\beta_i$) be the obvious closed curves traversing the bands on the left (resp.\ right) side of each component of these surfaces -- when viewed with its outer boundary at the bottom -- we see that:

\begin{lemma} \label{L:elementary}
All canonically marked atomic tangles are elementary.
\end{lemma} 

%%%%%%%%%%%%%%%%%%%%%%%%%%%%%%%%%%%%%
\head{Tangle Sums}
%%%%%%%%%%%%%%%%%%%%%%%%%%%%%%%%%%%%%

Tangles can be added together in a variety of ways.  For the present purposes, the following notions of {\it tangle sums} and {\it marked tangle sums} will suffice: 

\begin{defn}\label{D:tanglesum} 
Given tangles $T_i\subset M_i$ for $i=1,2$, choose {\it gluing disks} $D_i\subset \del M_i$ containing an equal number of tangle endpoints in their interiors.  Then glue $M_2$ to $M_1$ by a diffeomorphism $h\colon D_2\to D_1$ that identifies these endpoints without creating loops.  The result is the {\it tangle sum} $T_1 +_h T_2 := T_1\cup_h T_2 $ in the boundary connected sum $M = M_1\cup_h M_2$.  The common image $D\subset M$ of the $D_i$ after gluing is called the {\it splitting disk} of the sum.   More generally we allow the $D_i$ (so also $D$) to be unions of more than one disk.
\end{defn}

To propagate hyperbolic structures on tangles to their tangle sums -- see \propref{gluing} below -- we will use a restricted class of {\it simple tangle sums}:

\begin{defn}\label{D:simplesum}
A tangle sum $T_0+_hT_1$ as in \defref{tanglesum} is {\it simple} if each gluing disk in $D_i$ contains at least two tangle endpoints, and each component of $\del M_i-D_i$ contains at least two tangle endpoints if it is a disk and three if it is a sphere.  In other words 
each component of $D_i-T_i$ and $(\del M_i-D_i)-T_i$ has negative euler characteristic, or equivalently, is not a sphere, torus, disk or annulus,
\end{defn}

\begin{defn}\label{D:markedtanglesum} If the tangles $T_i\subset M_i$ in \defref{tanglesum} have markings $A_i \subset \del M_i$ that {\it straddle} the boundaries of the gluing disks $D_i$ (meaning each arc component of $A_i$ that intersects $D_i$ meets $\del D_i$ transversely in a single point), and if $h$ {\it preserves} the markings (meaning $h(A_2\cap D_2) = A_1\cap D_1$), then $T_1 +_h T_2$ acquires a natural marking $A_1+_h A_2 := (A_1\cup_h A_2)\cap\del M$, and the result is the {\it marked tangle sum} $(T_1,A_1)+_h(T_2,A_2) :=(T_1 +_h T_2,A_1+_h A_2)$. 
\end{defn} 

A marked tangle sum is shown in \figref{tanglesum}.  It is immediate from the definition -- by gluing the relevant Seifert surfaces together -- that marked sums of boundary tangles are again boundary tangles.  The same is true for elementary tangles and {\it doubly-slice tangles}  (defined below; see \propref{ds}a).

%%%%%%%%%% FIG 4 %%%%%%%%%%
\fig{90}{FigSum}{
\put(-440,20){\small$M_0$}
\put(-202,20){\small$M_1$}
\put(-383,34){\small$D_0$}
\put(-310,34){\small$D_1$}
\put(-347,4){\small$h$}
\put(-50,7){\small$M_0\cup_h M_1$}
\caption{A marked tangle sum $T_0\cup_hT_1 \ \subset \ M = M_0\cup_h M_1$}
\label{F:tanglesum}
\adjust[10]}
%%%%%%%%%%%%%%%%%%%%%%%%

%%%%%%%%%%%%%%%%%%%%%%%%%%%%%%%%%%%%%
\head{Thurston's hyperbolization and simple tangles}\label{S:hyperbolic}
%%%%%%%%%%%%%%%%%%%%%%%%%%%%%%%%%%%%%

The 3-manifolds that we construct in the course of proving \thmref{hypthm} will be shown to be hyperbolic using Thurston's hyperbolization theorem for Haken 3-manifolds \cite{thurston:hyp,kapovich:hyperbolic,otal:fibred,otal:haken} and standard techniques for checking that 3-manifolds obtained by gluing satisfy the hypotheses of his theorem.

To efficiently state Thurston's theorem and the relevant gluing results, it is convenient to call a surface in a $3$-manifold {\it essential} if it is compact, properly embedded, incompressible and
% recall that compressibility means there is an essential loop on the surface that bounds a disk in the 3-manifold OR the surface is a sphere bounding a ball, so incompressibility means there is no such essential loop AND the surface is not a sphere bounding a ball
nonboundary-parallel (see \cite{waldhausen} for the definitions).  We also assume implicitly unless indicated otherwise that all $3$-manifolds are compact, oriented, irreducible (no essential spheres) and boundary irreducible (incompressible boundary).  
Such a $3$-manifold is said to be {\it Haken} if it contains an essential surface, and {\it simple} if it contains no essential tori, annuli or disks.
% simple or Haken manifolds contain no boundary 2-spheres, since they are assumed irreducible, so by definition the 2-sphere would bound a ball, and boundary irreducible (meaning the boundary is incompressible), so by definition the 2-sphere would be incompressible, contradiction; also a disjoint union of simple manifolds is simple.
The analogous conditions for tangles will also be used:

\begin{defn}\label{D:simpletangle}
A tangle in a $3$-manifold is {\it Haken} if its exterior (the complement of an open tubular neighborhood) is Haken, and {\it simple} if its exterior is simple.
\end{defn}

Thurston's theorem for {\it closed} $3\textup{-manifolds}$ asserts that these conditions together -- the presence of an essential surface of Euler characteristic $<0$ but none of Euler characteristic $\ge0$ -- imply that the manifold is either {\it small} (meaning Seifert fibered over the $2$-sphere with three singular fibers) or hyperbolic. 

\begin{theorem}
\hskip-3pt{\rm(Thurston)}\ Any closed simple Haken $3$-manifold that is not a small Seifert fibered manifold admits a complete hyperbolic metric.
\end{theorem}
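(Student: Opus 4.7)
The plan is to give Thurston's proof, which proceeds by induction along a \emph{Haken hierarchy}. Since $M$ is Haken, it contains an essential surface $F_1$; cutting along $F_1$ yields a (possibly disconnected) Haken manifold $M_1$ with boundary, and iterating produces a sequence $M = M_0, M_1, \dots, M_n$ with $M_n$ a disjoint union of $3$-balls. Haken's finiteness theorem ensures that this process terminates. Each ball trivially admits a hyperbolic structure with totally geodesic boundary, so one attempts to hyperbolize the $M_i$ in reverse order, at each stage gluing the hyperbolic structure on $M_{i+1}$ back along two copies of $F_i$ to recover a putative hyperbolic structure on $M_i$.

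The inductive step is the analytic heart of the argument, and splits into two cases. In the \emph{non-fibered case} the crucial tool is Thurston's \emph{skinning map} $\sigma$, a self-map of the Teichm\"uller space of $\partial M_{i+1}$ that records the failure of the two conformal structures a boundary component inherits from the two sides of the gluing to agree. Hyperbolizing $M_i$ amounts to finding a fixed point of $\sigma$ composed with the gluing involution, and Thurston's fixed point theorem supplies one by exploiting a contraction property of $\sigma$ that draws on Bers' embedding of quasi-Fuchsian space together with compactness results for pleated surfaces.

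In the \emph{fibered case}, where $M_i$ fibers over $S^1$, the hierarchy strips the manifold down to a product $F \times I$ and hyperbolizing $M_i$ reduces to showing that the mapping torus of a pseudo-Anosov homeomorphism of $F$ carries a hyperbolic metric. This rests on Thurston's \emph{Double Limit Theorem} for sequences of quasi-Fuchsian representations of $\pi_1(F)$, and is arguably the deepest single ingredient in the entire program. The main obstacles are precisely this double limit theorem and the contraction property of the skinning map; these constitute the technical core of Thurston's proof and are where I would expect to spend the bulk of the effort. Rather than reconstruct Thurston's original unpublished argument, I would follow Otal's treatment of the fibered case~\cite{otal:fibred,otal:haken} and Kapovich's exposition of the general Haken case~\cite{kapovich:hyperbolic}.
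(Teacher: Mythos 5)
The paper does not prove this theorem: it is stated as a black-box citation of Thurston's hyperbolization theorem for Haken manifolds, with references given to Thurston's announcement~\cite{thurston:hyp} and to the complete written treatments by Kapovich~\cite{kapovich:hyperbolic} and Otal~\cite{otal:fibred,otal:haken}. So there is no ``paper's own proof'' to compare against, and your decision to defer to exactly those sources is consistent with how the paper itself handles the matter.

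Your outline of Thurston's program --- Haken hierarchy, induction up the hierarchy via the skinning map and Thurston's fixed point theorem in the non-fibered case, and the Double Limit Theorem in the fibered case --- is a broadly accurate summary of the argument as presented in Otal and Kapovich. One substantive correction, though: the base case is not that ``each ball trivially admits a hyperbolic structure with totally geodesic boundary.'' A $3$-ball embedded in $\bh^3$ has strictly convex, not totally geodesic, boundary, and a ball admits no complete finite-volume hyperbolic structure at all. What the induction actually carries up the hierarchy is a \emph{geometrically finite} hyperbolic structure on each \emph{pared} manifold in the hierarchy (with the parabolic locus keeping track of the tori and annuli), and the base case is the existence of such a structure on the terminal pieces, which are balls viewed as trivial pared manifolds with conformal boundary at infinity, not totally geodesic boundary. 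The gluing at each stage then uses Maskit combination together with the fixed point of the skinning map composed with the gluing involution. If you plan to flesh out this sketch, that is the place where the phrasing needs to change; the two genuinely deep ingredients you identify (the contraction/boundedness properties of the skinning map, and the Double Limit Theorem) are indeed the analytic core.
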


The hyperbolization result of the last author~\cite{ruberman:seifert} relied on proving that the atomic tangles $\R_n$ from \S1 are simple for $n\ge3$ (\propref{rub}a).  We also need this result here, but will not repeat the proof.\foot{Here is a sketch from \cite{ruberman:seifert}:  Recall that $\R_n$ is the lift of an arc $\R$ in the $3$-ball under a branched cover along an axis $\A$ of the ball  disjoint from $\R$.  Let $X = B^3-\inte(R\cup D)$, where $R$ and $D$ are closed tubular neighborhoods of $\R$ and $\A$, and let $P$ be the annulus $\del X \cap R$.   Gluing results from \cite{morgan} can be used to show that $X$ is hyperbolic in a certain sense, in particular the pair $(X,P)$ is a `pared manifold'.  The result then follows by standard arguments about incompressible surfaces in branched covers, using equivariant versions of the loop, sphere, annulus and torus theorems (see Theorem 2.10 in \cite{ruberman:seifert}).  A direct proof that $\R_n$ is simple for $n\ge3$ using \lemref{surf} would also be desirable; see \remref{R2} for such a proof when $n=2$.}  In addition, we need a way to see that certain manifolds built from these atoms are simple. The necessary gluing results can be found in Myers' work \cite{Myers1,Myers2}.

\begin{defn}\label{D:simple}
Let $\M$ be a compact irreducible % (but not necessarily boundary irreducible) 
$3$-manifold and $F$ a compact subsurface of $\del \M$.  A properly embedded surface $S\subset M$ is of {\it type k} (with respect to $F$) if $\del S$ is transverse to $\del F$ in $\del M$, and $S\cap F$ consists of $k$ arcs, all essential in $F$, and any number of circles.  The pair $(M,F)$ is \emph{simple} if it satisfies the properties
\items
\item[{\small\bf a)}] $F$ contains no torus, annulus or disk components, and % i.e.\ all its components have Euler characteristic $<0$
\item[{\small\bf b)}] $\M$ contains no essential tori, annuli of type $0$, or disks of type $\le1$,
% Note that Myers requires the boundary of annuli A to be disjoint from the boundary of F; 
% get interior of A disjoint from boundary of F for free since A is properly embedded
\stopitems
\noindent and {\it very simple} if it also satisfies {\small\bf c)} $M$ contains no essential disks of type $2$.  
\end{defn}

\begin{remarks}\label{R:simple}
{\small\bf a)} \ Arcs in $\del S\cap F$ that are inessential in $F$ can always be removed by an isotopy of $S$.  Thus every properly embedded surface in $M$ is isotopic to one of type $k$ for some $k$.  \\[2pt]
\noindent {\small\bf b)} \ Having no essential disks of type $0$ is equivalent to the incompressibility of $F$ and $\del M - F$ in $M$.  \\[2pt] 
\noindent {\small\bf c)} \ When the pair $(M,F)$ is simple, $M$ itself need not be simple.  In particular, $\del M$ may be compressible in $M$ and/or $M$ may contain incompressible annuli or disks, but the annuli of type 0 and disks of type $\le1$ (and also type $2$ for very simple pairs) must all be boundary parallel.  However, {\it If $M$ is simple, then the pair $(M,F)$ is very simple if and only $F$ satisfies just property {\small\bf a)} in} \defref{simple}; properties {\small\bf b)} and {\small\bf c)} are automatic since $M$ contains no essential tori, annuli or disks whatsoever.  \\[2pt]  
\noindent {\small\bf d)} \ A compact irreducible $3$-manifold $M$ {\sl is} simple if and only if the pair $(M,\varnothing)$ is (very) simple. 
\end{remarks}

These notions of {\it simple} and {\it very simple} pairs are exactly Myers' {\it Properties} $B^\prime$ and $C^\prime$ that feature in the following gluing result  \cite[Lemma 2.5]{Myers2}, proved in Section 3 of \cite{Myers1}:

\begin{lemma}\label{L:surf}
\hskip-3pt{\rm(Myers)}\ If $(M_0,F_0)$ is very simple, $(M_1,F_1)$ is simple, and $h\colon F_1\to F_0$ is a homeomorphism, then $M= M_0\cup_h M_1$ is simple and Haken.
\end{lemma}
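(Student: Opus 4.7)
The plan is to adapt the standard cut-and-paste strategy. Given any hypothetical essential sphere, disk, annulus, or torus $S$ in $M = M_0 \cup_h M_1$, isotope $S$ to minimize $|S \cap F|$, where $F \subset M$ is the image of the glued surface $F_0 \cong F_1$. Incompressibility of $F$ on both sides (property a) drives the usual innermost-disk and outermost-arc arguments and forces every circle and every arc of $S \cap F$ at minimality to be essential both in $S$ and in $F$. Each case then reduces to analyzing the pieces $S \cap M_i$, which minimality plus essentialness of $S$ in $M$ force to be essential in $(M_i,F_i)$.

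First I would establish that $M$ is irreducible and that $\del M$ is incompressible. For a sphere $S$, after minimization $S$ lies in some $M_i$ and bounds a ball by irreducibility of $M_i$ (implicit in the simple hypothesis). For a compressing disk $D$ of $\del M$, an outermost arc of $D \cap F$ cuts off a subdisk in some $M_i$ meeting $F$ in at most one arc, ruled out by a) if zero and by c) if one. Essentialness of $F$ in $M$ then follows from a), together with the absence of disk components (b) and a small check that $F$ is not boundary-parallel, so $F$ is an essential surface in $M$ and $M$ is Haken.

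The remaining work is the torus and annulus cases, where the decomposition produced by minimization consists of annulus pieces cut off by circles of $S \cap F$ and, in the annulus case, also disk pieces cut off by arcs of $S \cap F$. The annulus pieces are essential in the respective $(M_i,F_i)$ and disjoint from $\del F_i$, so they are forbidden on both sides by condition c). The disk pieces must be handled according to the number of arcs of their boundary lying on $F$: on the $M_1$ side they arise with a single such arc and are ruled out by c), while on the $M_0$ side they arise with two such arcs and are ruled out by d), which is precisely the extra restriction in \emph{very simple}.

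The main obstacle is this last piece of combinatorial bookkeeping. One must check that, at minimum intersection with $F$ and with $S$ essential in $M$, the disk pieces really are essential in their respective $(M_i,F_i)$, and that the arc-counts on the boundaries actually match the restricted patterns that c) and d) cover -- ruling out, for instance, a priori disks on the $M_0$ side with three or more arcs on $F_0$ by a further reduction of $|S\cap F|$. This is exactly the step that uses the asymmetric strengthening from c) to d) on one side of the gluing; the topology itself is routine once the bookkeeping is pinned down.
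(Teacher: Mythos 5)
The paper does not prove this lemma: it is quoted verbatim as \cite[Lemma~2.5]{Myers2} (stated in terms of Myers' Properties $B'$ and $C'$, which \defref{simple} reproduces as ``simple'' and ``very simple''), with the proof attributed to \cite{Myers1}. So there is no in-paper argument to compare against; what follows compares your sketch to Myers-style cut-and-paste, which is indeed the intended method.

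Your overall framework --- minimize $|S\cap F|$ for a hypothetical essential sphere/disk/annulus/torus $S$, use incompressibility of $F$ on both sides to make every circle and arc of $S\cap F$ essential, and then forbid the resulting pieces using a), b), c), d) --- is the right one, and you correctly locate where the asymmetric condition d) must enter (spanning arcs on an essential annulus). But the specific bookkeeping claim you make is wrong as stated, and it is exactly the claim that carries the argument. When $A$ is an essential annulus meeting $F$ only in spanning arcs, those arcs cut $A$ into rectangles that alternate between $M_0$ and $M_1$, and \emph{every} rectangle --- on both sides --- is a disk meeting $F$ in two arcs. It is not the case that the $M_1$ pieces meet $F_1$ in a single arc; a piece meets $F$ in one arc only when it is cut off by an outermost arc, and outermost arcs need not exist (indeed they do not when all arcs are spanning). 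The correct resolution of the asymmetry is different from what you wrote: the two-arc rectangles in $M_1$ are simply not an obstruction and do not need to be forbidden. What you argue instead is that if any rectangle (on either side) were boundary-parallel, $|A\cap F|$ could be reduced, contradicting minimality; hence all rectangles are non-boundary-parallel, hence (using irreducibility of the pieces) essential; and in particular the $M_0$-rectangles are essential disks meeting $F_0$ in two arcs, which is what d) forbids. This is why ``very simple'' is needed on only one side. The same comment applies to your M\"obius/annulus remark and to your closing paragraph: the obstacle you flag is real, but the ``restricted patterns that c) and d) cover'' are not ``one arc on $M_1$, two arcs on $M_0$.'' You would also need to fill in the ``small check'' that a component of $F$ is not boundary-parallel in $M$; that check uses properties a)--c) to rule out a product region on either side of $F$ and is not entirely cosmetic.
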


\noindent {\it Proof sketch}.   A standard innermost curve argument shows that the {\it splitting surface} $F$ in $M$ (the common image of $F_0$ and $F_1$ after the gluing) is incompressible, so $M$ is Haken.  If $S$ is any proper disk, incompressible annulus or incompressible torus in $M$, then innermost curve and outermost arc arguments using conditions {\small\bf b)} and {\small\bf c)} in \defref{simple} show how to isotop $S$ off of $F$, and thus into $M_i$ for some $i\in\{0,1\}$.  It follows from the simplicity of $(M_i,F_i)$ that $S$ is parallel to a surface in $\del M_i$, which by condition {\small\bf a)} actually lies in $\del M_i-F_i\subset \del M$.  Thus $S$ is inessential in $M$, proving that $M$ is simple.    \qed

\begin{remark} \label{R:R2} The conclusion of the lemma still holds under the weaker assumption that $h$ is an embedding of $F_1$ onto a union of components of $F_0$, essentially by the same argument.  From this one can give a quick proof that {\it the atomic tangle $\R_2\subset B^3$ is simple} (a result of independent interest, not needed in this paper):  First observe that $\R_2$ is isotopic to the {\it pretzel tangle} $T_{3,-3,3}$ shown in \figref{R2}a.  Viewing any tangle $T$ as lying in $I\times D^2$, where both strands enter through the left disk $T^-=\{0\}\times D^2$ and exit through the right disk $T^+=\{1\}\times D^2$, we see that $T_{3,-3,3} = T_{3,-3}+_hT_3$, where $h\colon T_3^-\to T_{3,-3}^+$, as indicated in \figref{R2}b.  Now appeal to the fact that $(T_{3,3},T_{3,3}^-\cup T_{3,-3}^+)$ is very simple and $(T_3,T_3^-)$ is simple, the latter proved in \cite[\S4]{Myers1} and the former by similar methods (exploiting the fact that there are only six proper arcs up to isotopy in a pair of pants).  
\end{remark}

%%%%%%%%%% FIG 5 %%%%%%%%%%
\fig{70}{FigR2}{
\put(-370,-17){\small a) \ $R_2$ pictured as the pretzel tangle $T_{3,-3,3}$ \hskip 47pt b) \ Decomposition as a tangle sum $T_{3,-3}+_hT_3$}
\put(-375,35){$T_{3,-3,3}^-$}
\put(-225,35){$T_{3,-3,3}^+$}
\put(-140,35){$T_{3,-3}^-$}
\put(2,35){$T_{3}^+$}
\put(-68,75){$T_{3,-3}^+ \!\overset{h}{\from}\!T_3^-$}
\caption{Decomposition of the atomic tangle $\R_2$}
\label{F:R2}
\adjust[10]}
%%%%%%%%%%%%%%%%%%%%%%%%%%%

We apply these gluing techniques in two situations, the first when the gluing surfaces have boundary, and the second when they are closed.  For the bounded case, consider a {\sl simple} tangle sum $T_0 \cup_h T_1$ of {\sl simple} tangles $T_i\subset M_i$ with gluing disks $D_i\subset \del M_i$.  Write $X_i$ for the exterior of $T_i$ in $M_i$, and set $Y_i = D_i\cap X_i$.  By \defref{simplesum}, this means that no components  of $Y_i$ or $\del X_i-Y_i$ are tori, annuli or disks.  Thus both pairs $(X_i,Y_i)$ are very simple by \remref{simple}c, so the exterior $X_0\cup_h X_1$ of $T_0 \cup_h T_1$ in $M_0\cup_h M_1$ is simple (and Haken) by \lemref{surf}.  This proves the first part of the following result; the second part is an immediate consequence of \remref{simple}c, \lemref{surf} and Thurston's theorem.  

\begin{proposition}\label{P:gluing}{\small\bf a)} \ Any simple sum of simple tangles is a simple tangle. \\[2pt]
\noindent {\small\bf b)} \ Any closed 3-manifold obtained by gluing together a pair of simple 3-manifolds is hyperbolic or a small
Seifert fibered space.  \qed
\end{proposition}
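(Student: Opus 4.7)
The plan is to reduce both parts of the proposition to a direct application of the two gluing lemmas developed in this section (\lemref{surf} and \lemref{addsurf}), together with Thurston's hyperbolization theorem in part (b). Both lemmas are already in hand, so the only work is to translate the tangle-theoretic hypotheses into their 3-manifold counterparts and check that the pieces fit.

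For part (a), I would start with a simple tangle sum $T = T_0 \cup_h T_1$ of simple tangles $T_i \subset M_i$, with gluing disks $D_i \subset \del M_i$. Let $X_i$ denote the exterior of $T_i$ in $M_i$, and set $Y_i := D_i \cap X_i$, so that the exterior of $T$ in $M_0 \cup_h M_1$ is $X_0 \cup_h X_1$ glued along the $Y_i$. Unwinding \defref{simplesum}, the simple-sum hypothesis says precisely that $Y_i$ has no disk or annulus components, and $\del X_i - Y_i$ has no disk components. Since each $T_i$ is a simple tangle, each $X_i$ is a simple 3-manifold, so \lemref{addsurf} applies and yields that $(X_i, Y_i)$ is very simple for $i=0,1$. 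Noting that `very simple' implies `simple', \lemref{surf} then gives that $X_0 \cup_h X_1$ is simple (and Haken), which is what we want.

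For part (b), I would run the analogous argument on a pair of simple 3-manifolds $M_0, M_1$ glued by a homeomorphism $h \colon \del M_1 \to \del M_0$ to a closed 3-manifold. Take $F = \del M_0 = \del M_1$; in the settings that arise in our application this surface has no disk, annulus, or torus components (each $M_i$ is irreducible with boundary components of genus at least two). Then \lemref{addsurf} shows each $(M_i, F)$ is very simple, \lemref{surf} yields that $M_0 \cup_h M_1$ is closed, simple, and Haken, and Thurston's hyperbolization theorem delivers a complete hyperbolic metric.

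The main obstacle, such as it is, lies in part (a): one has to verify that the combinatorial conditions on gluing disks and tangle endpoints recorded in \defref{simplesum} translate exactly into the component-type hypotheses that \lemref{addsurf} requires for $Y_i$ and $\del X_i - Y_i$. This is careful bookkeeping rather than substantive difficulty, and once the translation is in place both parts follow with essentially no further work.
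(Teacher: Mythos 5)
Your argument is correct and matches the paper's own proof: setting $Y_i = D_i \cap X_i$, translating the simple-sum hypothesis of Definition~\ref{D:simplesum} into the component conditions of Lemma~\ref{L:addsurf}, and then applying Lemma~\ref{L:surf} (plus Thurston's theorem for part (b)). Your explicit observation that the gluing surface in part (b) must have no torus, annulus, or disk components is a correct reading of the lemma hypotheses, one that the paper leaves implicit but that is indeed satisfied in all the applications (where the boundary components have genus at least two).
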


Our proof of \thmref{hypthm} will rely on this proposition in the following way:  Starting with a Heegaard splitting $\H_0\cup_h\H_1$ of a 3-manifold $\M$, we will repeatedly apply \propref{gluing}a (and \propref{ds}a below) to construct simple, doubly-slice (as defined in the next subsection) {\it molecular tangles} in $\H_0$ and $\H_1$, with an equal number of strands.  Then gluing their exteriors $\rH_0$ and $\rH_1$ together by a natural map $h_\text{hyp}$ induced by $h$ will yield a simple manifold $\Mh$, by \propref{gluing}b.  A
small modification will then show that the result is not a Seifert Fibered space. If the Heegaard splitting of $\M$ is equivariant with respect to the action of a finite group $G$ on $\M$ (in a strong sense explained in the next section), and the simple tangles in the $\H_i$ are suitably chosen, then $\M$ and $\Mh$ will be invertibly homology cobordant (as defined below). The details of this construction will be explained in the next three sections. 

To complete the proof of \thmref{hypthm} we will need to show that the orbifold $\M/G$ is hyperbolic. There are two approaches: either expand the discussion above to include a definition of simple orbifold pairs, and argue that the gluing results hold in this more general setting, or make use of Thurston's orbifold theorem~\cite{boileau-leeb-porti:orbifolds,cooper-hodgson-kerckhoff:orbifolds}.  We follow the latter approach, and in fact need only the following special case:
%\smargin{D 11/18. Changed references per item (27)}
\begin{theorem}\label{T:orbifold}
Any action of a finite group $G$ on a closed hyperbolic $3$-manifold $\M$ is conjugate to an action by isometries, and so $\M/G$ is a hyperbolic orbifold.
\end{theorem}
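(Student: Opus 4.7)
My plan is to apply Thurston's orbifold theorem to the quotient orbifold $\mathcal{O} := \M/G$ and then invoke Mostow--Prasad rigidity to upgrade the given topological $G$-action to an isometric one with respect to the given hyperbolic metric on $\M$.

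The first step is to verify that $\mathcal{O}$ satisfies the hypotheses of the orbifold theorem. Since $G$ acts effectively on the hyperbolic manifold $\M$, the quotient $\mathcal{O}$ is an orientable good orbifold with universal cover $\bh^3$, fitting into the short exact sequence
\[
1 \to \pi_1(\M) \to \pi_1^{\mathrm{orb}}(\mathcal{O}) \to G \to 1.
\]
Standard lifting arguments show $\mathcal{O}$ is irreducible, atoroidal, and contains no bad $2$-suborbifolds: any such object would lift to an essential sphere or torus in $\M$, contradicting hyperbolicity.

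Applying the orbifold theorem \cite{boileau-leeb-porti:orbifolds,cooper-hodgson-kerckhoff:orbifolds} then endows $\mathcal{O}$ with a complete hyperbolic structure. Pulling this metric back through the orbifold cover $\M \to \mathcal{O}$, whose deck transformation group is the given $G$, produces a hyperbolic metric $g_{\mathrm{new}}$ on $\M$ for which the original $G$-action is by isometries. By Mostow--Prasad rigidity, there is a diffeomorphism $\phi\colon\M\to\M$ realizing an isometry from $(\M, g_{\mathrm{new}})$ to $\M$ with its given hyperbolic metric; conjugating by $\phi$ then yields an isometric action of $G$ on $\M$ (with its given metric) that is smoothly conjugate to the original action, establishing the theorem. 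The second conclusion, that $\M/G$ is a hyperbolic orbifold, is then immediate.

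The main obstacle is arranging the orbifold-theoretic step so that the hyperbolic metric produced on $\mathcal{O}$ pulls back to a metric on $\M$ invariant under precisely the $G$-action we started with, rather than some a priori unrelated action sharing the same orbit space. This is why it is important that $\mathcal{O}$ is constructed as $\M/G$ with its given $G$-action: the orbifold covering $\M \to \mathcal{O}$ then tautologically has the given $G$ as its deck group, so any metric on $\mathcal{O}$ lifts to a $G$-invariant metric on $\M$ for the original action. Mostow rigidity then does the remaining work of transporting this $G$-invariance back to the given hyperbolic structure.
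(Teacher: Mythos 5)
The paper does not actually give a proof of this theorem: it is presented as a special case of Thurston's orbifold theorem, with a pointer to \cite[Theorem C.5.6]{bp} and to \cite{boileau-leeb-porti:orbifolds,cooper-hodgson-kerckhoff:orbifolds}. Your proof supplies the standard argument behind those citations, and its overall structure is correct: form the good orbifold $\mathcal O = \M/G$, geometrize it, pull the hyperbolic metric back to a $G$-invariant metric $g_{\mathrm{new}}$ on $\M$, and use Mostow rigidity to conjugate the isometric action for $g_{\mathrm{new}}$ into an isometric action for the given metric. Your closing observation, that $\M\to\mathcal O$ tautologically has the given $G$ as deck group so that any metric on $\mathcal O$ lifts to one invariant under the \emph{original} action, is exactly the point that makes the conjugacy land on the right action.

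Two steps should be tightened. First, the orbifold theorem of Boileau--Leeb--Porti and Cooper--Hodgson--Kerckhoff concludes only that $\mathcal O$ is \emph{geometric}, not that it is hyperbolic; you then need to rule out the other seven geometries. This is quick: $\pi_1^{\mathrm{orb}}(\mathcal O)$ contains $\pi_1(\M)$, a cocompact lattice in $\operatorname{Isom}(\bh^3)$, as a finite-index subgroup, which is incompatible with the Seifert-fibred, Euclidean, $\mathrm{Sol}$, etc.\ geometries (e.g.\ because $\pi_1(\M)$ is word-hyperbolic and not virtually solvable). Second, the orbifold theorem is stated for orbifolds with \emph{nonempty} singular locus; when $G$ acts freely, $\M/G$ is a closed manifold and you should instead invoke geometrization for manifolds (Perelman) to conclude that this closed, irreducible, aspherical, atoroidal manifold with word-hyperbolic $\pi_1$ is hyperbolic. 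With these two patches, the proposal is a correct proof along the same lines the paper intends.
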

\noindent This is due to Wang~\cite{wang:action} if the action has fixed points (meaning points with nontrivial stabilizers), and follows by geometrization \cite{perelman:ricci,perelman:extinction, perelman:surgery, morgan-tian:geometrization} or by~\cite{gabai-meyerhoff-thurston} for free actions.

%%%%%%%%%%%%%%%%%%%%%%%%%%%%%%%%%%%%% 
\head{Invertible cobordisms and doubly-slice tangles}
%%%%%%%%%%%%%%%%%%%%%%%%%%%%%%%%%%%%%

Invertible cobordisms of manifolds, knots and links have been studied since the 1960s; see e.g.\ \cite{fox:trip,sumners:inv1,sumners:inv2,gordon-sumners:ball-pairs}.  For manifolds $M$ and $N$ of the same dimension whose boundaries (if nonempty) are identified by a diffeomorphism $h$, a {\it cobordism} from $M$ to $N$ is a manifold $P$ with $\del P = -M\cup_h N$ (\,$=-M\sqcup N$ when the boundaries are empty).  Thus in the bounded case $P$ can be viewed as a relative cobordism from $M$ to $N$ with the vertical part of $\del P$ diffeomorphic to $\del M\times I$, extending $h$ at the top.  This cobordism $P$ is said to be {\it invertible} if there is a cobordism $Q$ from $N$ to $M$ such that $P\cup_N Q \cong \M\times I$.  We then say that $\M\times I$ is {\it split} along $N$, and call $Q$ {\it an inverse} of $P$.  
% This inverse need not be unique nor a two-sided inverse; indeed it need not itself be invertible (see \remref{invertible}). 
Familiar examples of 3-dimensional invertible cobordisms arise from any homology 3-sphere $\M$ that bounds a contractible 4-manifold whose double is the 4-sphere; the complement in the $4$-manifold of an open 4-ball is then an invertible cobordism from the 3-sphere to $\M$.  An important example of this type is given by the Mazur manifold $W$~\cite{akbulut-kirby:mazur,mazur:contractible}.

Similar language applies to {\it link concordances}, from one link $S$ to another $T$ in a manifold $M$, meaning embeddings of disjoint annuli in $M\times I$ that stretch from $S$ to $T$.  Such a concordance is {\it invertible} if it can be followed by a concordance from $T$ to $S$ to produce the product concordance from $S$ to itself.  If $S$ is an unlink, then $T$ is said to be {\it invertibly null-concordant} or {\it doubly-slice}.  

\begin{remark} \label{R:invertible} The relations of invertible cobordism and concordance are clearly reflexive and transitive, but generally not symmetric.  
%  \smargin{\color{blue} PM: What about 3-manifolds with boundary?  In particular, is the relation of invertible concordance of knots/links antisymmetric?  If I recall, didn't Danny prove that the genera at least would be equal.} 
For example, any sphere is invertibly cobordant to a disjoint union of two spheres, but not conversely, and analogously an unknot is invertibly concordant to a two component unlink, but not conversely.  In fact, for closed manifolds of dimension 3 or less, invertible cobordism is an {\it antisymmetric} relation, and thus a partial order.  For hyperbolic $3\text{-manifolds}$ this follows from degree and volume considerations (cf.\  \cite[Theorem C.5.5]{bp}) and a general proof for 3-manifolds is given in the appendix, where it is also noted that antisymmetry fails in higher dimensions. 
\end{remark}

These notions have also been studied for {\it tangle concordances} in $3$-manifolds, where there is the added requirement that the concordance must be a product along the boundary (see e.g.\ \cite{ruberman:seifert} \cite{kim}).  It follows that the tangles at the ends will have the same endpoints, and perhaps more significantly, will have exteriors whose boundaries are naturally identified; this result will be used in the proof of \thmref{hypthm} in \secref{replacement}:

\begin{lemma}\label{L:longitude}  
Let $C\subset M\times I$ be a concordance between tangles $T_0$ and $T_1$ in a $3$-manifold $M$.  Choose open tubular neighborhoods $N_i$ of $T_i\subset M$ that agree on $\del M$, and set $E = N_i\cap\del M$.  Then there is a canonical identification between $\del(M-N_0)$ and $\del(M-N_1)$ extending the identity on $\del M-E$.  \end{lemma}

\begin{proof}
It suffices to show that a choice of longitudes for the strands of $T_0$ canonically induces a choice of longitudes for the strands of $T_1$.  
Since $C$ is topologically a union of rectangles, it has a trivial normal bundle in $M\times I$.  The longitudes for $T_0$ correspond to a trivialization of this bundle along one side of each rectangle.  These extend trivially along the adjacent sides of the rectangles (lying in $\del M\times I$) and then uniquely across the rest of $C$, restricting to the desired trivializations along $T_1$. 
\end{proof} 

When considering concordances between {\sl marked} tangles, the product structure along the boundary allows us to compare the markings at the ends of the concordance, and we require these to be the {\sl same}\,: marked tangles $(T,A)$ and $(T',A')$ are {\it concordant} if and only if $T$ and $T'$ are concordant and $A=A'$.  We then have the following notions for marked tangles, analogous to the corresponding notions for links:

\begin{defn} \label{D:doublyslice}
A concordance from one marked tangle $(S,A)$ to another $(T,A)$ in a $3$-manifold $M$ is {\it invertible} if it can be followed by a concordance from $(T,A)$ to $(S,A)$ to produce the product concordance from $(S,A)$ to itself.  If $(S,A)$ is trivial, then $(T,A)$ is said to be a {\it doubly-slice tangle}.\foot{Note that all doubly-slice tangles $(T,A)$ are boundary tangles; a Seifert surface is obtained by intersecting the union of $3$-balls bounded by the union of an invertible concordance from a trivial tangle to $(T,A)$ and its inverse with the middle level between the two.  The converse is false, e.g.\ a knotted trefoil arc is a boundary tangle but not doubly-slice (nor even slice).}  
% Thus for marked tangles, elementary$\implies$doubly-slice$\implies$boundary, but not conversely.  
\end{defn}

%\smargin{\color{blue} PM: Is it true that a marked tangle $(T,A)$ is doubly-slice if and only if its associated link $T\cup A$ is doubly-slice?  Maybe can use Palais' disk theorem to prove this ...}  

The following proposition gives tangle versions of standard properties of invertible concordances of knots and links. The first part generalizes the fact that connected sums of doubly-slice knots are doubly-slice, while the second is a relative version of a well known double slicing technique introduced by Terasaka and Hosokawa~\cite{terasaka-hosokawa} (cf.\ \cite[Proof of 2.6]{ruberman:seifert}). 

%%%%%%%%%%%%%%%%%%%%
\begin{proposition}\label{P:ds}
{\small\bf a)} \ Marked tangle sums of doubly-slice tangles are doubly-slice. \\[2pt]
\noindent{\small\bf b)} \ Elementary tangles are doubly-slice, and thus all canonically marked atomic tangles $\R_n$ are doubly-slice \ {\rm(by \lemref{elementary}, proving \propref{rub}b)}.
\end{proposition}
%%%%%%%%%%%%%%%%%%%%

\begin{proof}
For {\small\bf a)} let $(T_1,A_1)+_h(T_2,A_2) \subset M_1\cup_h M_2$ be a marked tangle sum, with gluing $h\colon D_2\to D_1$ as in \defref{markedtanglesum}.  We can define the {\it sum} of any pair of concordances between $(T_i,A_i)$ and another marked tangle $(S_i,A_i)$ (for $i=1,2$) by gluing them together using $h\times\id\colon D_2\times I\to D_1\times I$.  Since the sum of product concordances is evidently a product, it follows that the sum of invertible concordances is invertible.  Thus if the $(T_i,A_i)$ are doubly-slice, then so is $(T_1,A_1)+_h(T_2,A_2)$.

To prove part {\small\bf b)} let $(T,A)$ be an $n$-stranded elementary tangle in $M$, and $F$ be a corresponding Seifert surface with  good basis $\{\alpha_i,\beta_i\}$ as in \defref{elementary}.  View $F$ as $n$ disjoint disks with bands attached along the basis curves.  Removing the $\beta$ bands from $F$ yields a surface $F_0\subset M$ that can be capped off with (parallel copies of) the disks bounded by the $\alpha$ curves
% provided by \defref{elementary}a, 
to form a trivial Seifert surface $E_0\subset M$ for a trivial tangle $(U_0,A)$.  Similarly form $F_1\subset M$ by removing the $\alpha$ bands from $F$, and then cap off with disks bounded by the $\beta$ curves to produce another trivial Seifert surface $E_1\subset M$ for a trivial tangle $(U_1,A)$.  By construction, $E_0$ and $E_1$ have the same outer boundary $A$ as $F$. 

Now build a 3-dimensional cobordism $\bp \subset M\times[0,1/2]$ 
from $E_0\subset M\times0$ to $F\subset M\times 1/2$, with outer lateral boundary $A\times[0,1/2]$ and inner lateral boundary a concordance $P$ from $(U_0,A)$ to $(T,A)$, as follows:  Start with $\bp$ as $F\times [0,1/2]$ with $2$-handles attached ambiently in $M\times[-1/2,0]$ along the $\alpha$ curves in $F\times0$.  Then push $\bp$ up  from its bottom level $E_0$ so that it lies in $M\times[0,1/2]$.  (A top down movie of the inner lateral boundary $P$ of $\bp$ 
%(the closure of the complement in $\del\bp$ of the union of $A\times[0,1/2]$, $E_0$ and $T$) 
is described as follows:  Start with $T$.  Then perform saddle moves along the cocores of the $\beta$ bands, tracing out a genus zero cobordism from $T$ to $\del F_0$.  Finish by capping off the $\alpha$ curves with disjoint disks.). Similarly build a cobordism $\bq \subset M\times[1/2,1]$ from $F\subset M\times 1/2$ to $E_1\subset M\times1$ with outer lateral boundary $A\times[1/2,1]$ and inner lateral boundary a concordance $Q$ in $M\times[1/2,1]$ from $(T,A)$ to $(U_1,A)$.  

Then $\bp\cup\bq$ is a product cobordism.  Indeed, since $|\alpha_i\cap\beta_j| = \delta_{ij}$, the 1-handles (upside down 2-handles) in $\bp$ are cancelled by the 2-handles in $\bq$, so $\bp\cup\bq$ is in fact a union of 3-balls.  It follows that $P$ is the desired null-concordance of $(T,A)$, with inverse $Q$. 
\end{proof}

%%%%%%%%%%%%%%%%%%%%%%%%%%%%%%%%%%%%%
\head{Homology cobordisms} \label{S:invertible}
%%%%%%%%%%%%%%%%%%%%%%%%%%%%%%%%%%%%

A {\it homology cobordism} is a cobordism for which the inclusions from the ends induce isomorphisms on homology with integer coefficients (understood throughout unless stated otherwise).  It is a standard and very useful observation that a concordance between knots or links induces a homology cobordism  between their complements; see for instance~\cite{gordon:contractible}.  We note a somewhat stronger property for the concordances constructed in the previous subsection.  Let $X$ be the exterior of a tangle $T$ in a $3$-manifold $\M$.  The inclusion of $X\hookrightarrow\M$ induces a homomorphism $\pi_1(X) \to \pi_1(\M)$.  Thus any module $V$ over $\bz[\pi_1(\M)]$ is also a module over $\bz[\pi_1(X)]$, so we can consider the twisted homology $H_*(X; V)$. 

\begin{lemma}\label{L:H-cob}
Let $T_0$ and $T_1$ be tangles in a compact $3$-manifold $\M$, with exteriors $X_0$ and $X_1$, and $C$ be an invertible concordance in $M\times I$ from $T_0$ to $T_1$, with exterior $X$.  Then $X$ is an invertible homology cobordism from $X_0$ to $X_1$, with twisted coefficients in any $\bz[\pi_1(M)]$-module $V$.  
\end{lemma}

\begin{proof}
Since $C$ is invertible, so is $X$, with inverse the exterior of the inverse tangle concordance for $C$.  That $X$ is a homology cobordism with twisted coefficients (even when $C$ is not invertible) is implicit in~\cite{cappell-shaneson:surgery}; here is a quick proof for the reader's convenience:   By hypothesis $X = \M\times I - \inte N$, where $N$ is a tubular neighborhood of $C$, and $X_i = X\cap(M\times i)$.  It suffices to show $H_*(X,X_i;V) = 0$.  Set $N_i = N\cap(M\times i)$, and note that the restriction of the coefficient system $V$ to $\partial N$ (and similarly for the $\partial N_i$) is trivial, because it extends over $N$.  Then for $i=0$ and $1$, there are relative Mayer-Vietoris sequences
$$
\cdots \to H_*(X\cap N,X_i\cap N_i;V) \to H_*(X,X_i;V)\oplus H_*(N,N_i;V) \to H_*(M\times I,M_i;V) \to \cdots
$$
in which all the groups except $H_*(X,X_i;V)$ clearly vanish.  Thus $H_*(X,X_i;V) = 0$ as well. 
\end{proof}      

Our main theorem constructs, for any $3$-manifold $M$, an invertible homology cobordism from $M$ to a hyperbolic $3$-manifold. When $M$ has a $G$-action the cobordism will be equivariant in the sense that it has a $G$-action extending the one on $M$.  We give a preliminary result  in this direction that will be used in the proof of the main theorem, based on the invertible homology cobordism coming from the Mazur manifold $W$ referenced above.  The boundary $\del W$ is $+1$ framed surgery on the $(-3,3,-3)$ pretzel knot. Wu showed that $\del W$ is irreducible and atoroidal in~\cite{wu:arborescent,wu:montesinos} and Meier proved that it is not Seifert fibered in~\cite{meier:small}.  It follows from geometrization~\cite{perelman:ricci,perelman:extinction,perelman:surgery,morgan-tian:geometrization} that $\del W$ is hyperbolic, and so in particular has non-trivial Gromov norm~\cite{gromov:norm}. This gives an example of a $\bz[\pi_1]$-invertible homology cobordism from $S^3$ ($\pi_1 = 1$) to a hyperbolic manifold. This in turn gives an easy way to construct an equivariant $\bz[\pi_1(M)]$-invertible homology cobordism from any $3$-manifold $M$ with a $G$ action to one with non-trivial Gromov norm. 

%%%%%
\begin{lemma}\label{L:gn}
Any  $3$-manifold $M$ with a $G$-action is equivariantly invertibly $\bz[\pi_1(M)]$-homology cobordant to a $3$-manifold with non-zero Gromov-norm.
\end{lemma}
%%%%%

\begin{proof} Extend the $G$-action to $I\times M$ as the action on the second factor, and define a $G$ action on $G\times  (W\setminus B^4)$ as the action on the first factor.  In this case, one directly defines the invertible homology cobordism as the sum along submanifolds given by
\[
P:= (I\times M) \cup_{G(I\times x)} \perp\!\!\!\perp G\times  (W\setminus B^4)
\]
where $x$ is any point with trivial stabilizer in $M$ and $W$ is the Mazur manifold. Here we pick any embedding $I=[0,1]\to (W\setminus B^4)$ taking $0$ into $S^3$ and $1$ into $\partial W$. A sum along submanifolds requires that the submanifolds be framed. The framing here is not important as long as it is equivariant and that may be accomplished by picking a framing on one component and translating it by the group action. The inverse homology cobordism is obtained by reversing the orientation on $P$.
\end{proof}

%%%%%%%%%%%%%%%%%%%%%%%%%%%%%%%%%%%%%
\head{$K$-molecules}
%%%%%%%%%%%%%%%%%%%%%%%%%%%%%%%%%%%%%

It was seen above that all canonically marked atomic tangles $\R_n\subset B^3$ are simple and doubly-slice.  In the proof of \thmref{hypthm}, we will construct some more complicated {\it molecular tangles} in $3$-dimensional handlebodies, each built as a simple marked sum of {\sl many} atomic tangles.  These molecular tangles will again be simple by \propref{gluing}a and doubly-slice by \propref{ds}a (or alternatively, noting that a marked sum of elementary tangles is again elementary, by \propref{ds}b).  

The first step in this construction is to build what we will call {\it $K$-molecules}, a family of simple elementary tangles in the $3$-ball, roughly parametrized by $1$-complexes $K$ lying on the ball's boundary $2$-sphere:   

\begin{defn}\label{D:kmolecule}  
For any $1$-complex $K$ (with at least one edge) embedded in $\del B^3$, a {\it $K$-molecule}  $\R_K \subset B^3$ is a marked tangle obtained by assembling canonically marked atomic tangles along $K$ in the following way:  Expand $K$ into a $2$-dimensional handlebody $H \subset \del B^3$ with a $0$-handle about each vertex and a $1$-handle along each edge, and thicken this into a $3$-dimensional handle structure for $H\times D^1$ in a boundary collar $C$ of $B^3$, where $H\times\{1\}\subset \del B^3$.  Now (referring to \defref{index}) insert a canonically marked atomic tangle $\R_{2k}$ of index $(2k,0)$ in each thickened $0$-handle $D^2\times D^1$ for a vertex of degree $k$ in $K$, and an $\R_4$ of index $(4,0)$ in each thickened $1$-handle $D^1\times D^1\times D^1\cong D^2\times D^1$ (see \figref{handletangles}a).  In particular, insert these atomic tangles so that the markings on the thickened handles match up where they intersect, with {\sl exactly two marking arcs} meeting each component disk in the attaching region $\del D^1\times D^1\times D^1$.  This construction is illustrated in \figref{molecule}, with the markings shown in bright colors in c), and subterranean details in d).
\end{defn}

%%%%%%%%%% FIG 6 %%%%%%%%%%
\fig{90}{FigMolecule}{
\put(-415,-17){\small a) \ $1$-complex $K\subset\del B^3$ \hskip 20pt b) \ Handlebody $H\subset\del B^3$ \hskip 25pt c) \ $K$-molecule
\hskip 37pt \ d) Details}
\caption{$K$-molecule}
\label{F:molecule}
\adjust[10] }
%%%%%%%%%%%%%%%%%%%%%%%%%%%

\begin{remarks}\label{R:kmolecule}
{\small\bf a)} \ The insertions of atomic tangles in the handles of $H\times I$ are not unique.  Thus the notation $\R_K$ does not specify a particular $K$-molecule, but any one will do for our purposes.  \\[2pt]
{\small\bf b)} \  All $K$-molecules are boundary tangles with genus two Seifert surface components, each arising as a boundary sum of genus one Seifert surface components from a pair of atomic tangles (see \figref{molecule}d). \\[2pt]
{\small\bf c)} \  If $T\subset B^3$ is a $K$-molecule, and $p\colon S^2\to S^2$ is a cover branched along the vertices $V$ of $K$ with induced cover $P\colon B^3\to B^3$ branched along the cone on $V$, then $P^{-1}(T)\subset B^3$ is a $p^{-1}(K)$-molecule.  This follows readily from \remref{lifts}, and is a key ingredient in our proof of equivariance in \thmref{hypthm}.   
\end{remarks}

\begin{lemma}\label{L:kmolecule}
Every $K$-molecule $\R_K\subset B^3$ is a $4e$-stranded simple, doubly-slice $($indeed elementary$)$ marked tangle, where $e$ is the number of edges in $K$.
\end{lemma} 

\begin{proof} By construction, $\R_K$ is a marked tangle sum of atomic tangles.  We organize this sum as follows.  Choose an ordering $h_1,\dots,h_n$ of all the handles in $H\times D^1$ (thus $n=v+e$ if $K$ has $v$ vertices) so that the union $h_1\cup\cdots\cup h_k$ is connected for each $k\le n$.  For convenience we assume $h_1$ is a $1$-handle.  Let $R_i$ be the atomic tangle inserted in $h_i$ as in \defref{kmolecule}, and $B$ be the $3$-ball (with corners) that is the closure of the complement of $H\times D^1$ in $B^3$.  Then $B^3$ is  a nested union of $3$-balls $B_1\subset B_2\subset\cdots\subset B_n = B^3$, where $B_i = B\cup h_1\cup\cdots\cup h_i$ contains the marked tangle $T_i = R_1\cup\cdots\cup R_i$.  In particular $T_n=\R_K \subset B^3$.

Now observe that $T_1=R_1\subset B_1$ is a copy of $\R_4\subset B^3$.  For each $i>1$, the tangle $T_i\subset B_i = B_{i-1}\cup h_i$  is a marked tangle sum $T_{i-1}+ R_i $.  This sum is simple since the condition that attaching disks of $1$-handles in $H\times I$ meet two marking arcs shows that the gluing disk for the sum contains at least two tangle endpoints.  Thus $T_i\subset B_i$ is simple by \propref{gluing}a.  Since both summands are elementary, so is $T_i\subset B_i$.  It follows by induction that $T_n\subset B_n$, which is just $\R_K\subset B^3$, is simple and elementary.

That $\R_K$ has $4e$ strands follows from the observation that each $1$-handle in $H\times D^1$ intersects $4$ strands, and every strand intersects exactly one such $1$-handle.
\end{proof}

%%%%%%%%%%%%%%%%%%%%%%%%%%%%%%%%%%%%%%%%%%%%%%%%
%%%%%%%%%  SECTION 3 : EQUIVARIANT HEEGAARD SPLITTINGS  %%%%%%%%%%
%%%%%%%%%%%%%%%%%%%%%%%%%%%%%%%%%%%%%%%%%%%%%%%%

\section{Equivariant Heegaard splittings} \label{S:heegaard}
Given a closed $3$-manifold $\M$ with an action of a finite group $G$, we seek to replace $\M$ with a hyperbolic manifold with a $G$-action.  We may assume without loss of generality that $\M$ is connected.  The strategy is to find a $G$-equivariant Heegaard splitting $\H_0\cup\H_1$ of $\M$ (the goal of this section), and then to replace each handlebody $\H_i$ with a fake handlebody $\rH_i$ with a $G$-action, chosen so that the glued up manifold $\rH_0\cup\rH_1$ is hyperbolic (the goal of \S4).  This replacement process will require a further decomposition of the $\H_i$ into 0 and 1-handles that will be regarded as part of the structure of the Heegaard splitting.  Our exposition will be facilitated by passing back and forth between $M$ and its quotient $M/G$, and so for clarity and notational economy we henceforth denote the image of any subset $K$ of $M$ under the quotient map $M\to M/G$ by $\overline K$.  In particular $\MG = \M/G$.

If $G$ acts freely, then we could simply lift a Heegaard splitting of the quotient manifold $\MG$ with an arbitrary handle structure on the two sides.  When $G$ has fixed points, the quotient $\MG$ is an orbifold, albeit a {\sl good} one, and so its underlying space is still a 3-manifold.  In this case we will need the Heegaard splitting of $\MG$ {\sl and} the associated handle structures of the sides to be adapted to the orbifold structure, cf.\ \cite{mccullough-miller-zimmermann:handlebodies,zimmermann:genus} for a related discussion of {\it orbifold handlebodies}.  This splitting is constructed as follows. 

The $G$-action on $\M$ is locally linear since it is smooth, and orientation preserving by hypothesis.  Thus the stabilizer $G_x$ of any point $x\in\M$ is isomorphic to a finite subgroup of $\sothree$, so is either cyclic, dihedral, or one of the three symmetry groups of the Platonic solids, acting linearly on a 3-ball about $x$.  It follows that the {\it singular set} $\Si$ of all points in $\M$ with nontrivial stabilizers forms a graph in $\M$, which may include edges with endpoints identified and circle components with no vertices.  The vertices of $\Si$ are the points with noncyclic stabilizers, and each (open) edge is made up of points with the same nontrivial cyclic stabilizer.  To record this fact more precisely, we assign labels to these vertices and edges.  Since the noncyclic finite subgroups of $\sothree$ are all triangle groups \,(the dihedral group $D_{2n}$ is $\Delta(2,2,n)$, while the tetrahedral, octahedral and icosahedral groups are respectively $\Delta(2,3,3)\cong A_4$, $\Delta(2,3,4) \cong S_4$ and $\Delta(2,3,5) \cong A_5$), assign the integer triple $(p,q,r)$ to each vertex $x$ of $\Si$ with $G_x \cong \Delta(p,q,r)$, and assign the integer $n$ to any edge whose stabilizer is isomorphic to $C_n$.  

Now consider the image $\SG$ of $\Si$ in the quotient orbifold $\MG$.  This is also a graph, with labels inherited from $\Si$.  Since it is locally the singular set of a finite linear quotient of the 3-ball, $\SG$ is in fact a {\sl trivalent} graph, with each vertex labeled by the triple of labels on the edges incident to it.  We call $\SG$ the {\it branch locus} as $\M$ is the branched cover of $\MG$ along $\SG$, with branching indices given by the labels.  The quotient map $\Si\to\SG$ is illustrated in \figref{branch} near a tetrahedral vertex $v$ in $\Si$, shown on the left side of the figure.  The red, green and blue edges end at the vertices, edge midpoints, and face centers of a spherically inscribed regular tetrahedron, whose first barycentric subdivision is shown.  The $1$-skeleton of this subdivision restricts to a $1$-complex $K$ on the boundary of the ball, shown in black, whose projection $\K$ is an equatorial triangle on the right side of the picture.  The same phenomenon holds for vertices of any type $(p,q,r)$.

%%%%%%%%%% FIG 7 %%%%%%%%%%
\fig{110}{FigBranch}{
\put(-245,-10){\small$\Si\subset\M$}
\put(-290,55){\small$K$}
\put(-66,-10){\small$\SG\subset\MG$}
\put(4,52){\small$\K$}
\put(-23,35){\small$3$}
\put(-110,42){\small$3$}
\put(-46,70){\small$2$}
\put(-85,57){\small$(2,3,3)$}
\caption{The picture of $\Si \to \SG$ near a point with tetrahedral stabilizer}
\label{F:branch}
\adjust[10]}
%%%%%%%%%%%%%%%%%%%%%%%%%%%

To build a Heegaard splitting of $\MG$ adapted to its orbifold structure, first extend $\SG$ to a larger trivalent graph $\DG\subset\MG$ whose complement is an open handlebody.  This is accomplished by adding new 1-labeled edges to $\SG$ corresponding to the 1-handles of a relative handlebody structure of the complement of a regular neighborhood of $\SG$, with endpoints chosen to lie at interior points of the edges in $\SG$.  Of course some edges of $\SG$ may be subdivided in this process.   If $e$ is such an edge with label $n$, then label each new edge of $\DG$ lying in $e$ with $n$, and each new vertex lying on $e$ with $(1,n,n)$ (specifying a cyclic stabilizer $C_n$).  Note that $\M$ is still a branched cover of $\MG$ along $\DG$, so we call $\DG$ the {\it extended branch locus}.   

Now let $\HG_0$ be a regular neighborhood of the extended branch locus $\DG$, built as a handlebody with 0 and 1-handles corresponding in the usual way to the vertices and edges of $\DG$.  The closure  $\HG_1$ of the complement of $\HG_0$ in $\MG$ is another handlebody of the same genus, which we can decompose into 0 and 1-handles using an arbitrarily chosen trivalent trivially-labeled spine (i.e.\ label all the edges with 1 and all the vertices with $(1,1,1)$).  This gives an  {\it orbifold Heegaard splitting} $\MG = \HG_0\cup\HG_1$ with $\DG\subset\HG_0$, in which each of the handlebodies is equipped with a specific handle structure reflecting the orbifold structure on $\MG$; we will refer to these as {\it orbifold handles}.   The lifts of the $\HG_i$ will then be equivariant handlebodies $\H_i$, equipped with their lifted handle structures, giving the desired equivariant Heegaard splitting $\M = \H_0\cup\H_1.$  In the next section, the orbifold Heegaard splitting will be used as a template to build a stabilized, equivariant fake Heegaard splitting $\rH_0 \cup \rH_1$ of the desired hyperbolic 3-manifold.

%%%%%%%%%%%%%%%%%%%%%%%%%%%%%%%%%%%%%%%%%%%%%%%%%%%%%%%%%%%%%%%%%%
%%%%%%%%%%%%%  SECTION 4 : REPLACEMENT HANDLEBODIES  %%%%%%%%%%%%%%%%%%%%%%%%%%%
%%%%%%%%%%%%%%%%%%%%%%%%%%%%%%%%%%%%%%%%%%%%%%%%%%%%%%%%%%%%%%%%%%

\section{Replacement handlebodies}\label{S:insert}
In this section, we describe how to insert equivariant simple doubly-slice tangles $\T_i$ into the handlebodies $\H_i$ in the equivariant Heegaard splitting $\M = \H_0\cup\H_1$ constructed in \S3; these will be called the {\it handlebody tangles} or {\it molecules}.  With the groundwork laid in \S2, the overall strategy is straightforward:  Place suitable equivariant $K$-molecules (as defined in \S2) in the $0$-handles of the decompositions of the $\H_i$ described above, and atomic tangles in the $1$-handles, in such a way that the markings match up to produce a simple marked tangle sum; the details are explained below.  The exteriors $\rH_i$ of the handlebody tangles $T_i$ in $\H_i$ are viewed as a replacements for the handlebodies $\H_i$, as in~\cite{ruberman:seifert}. Each $\rH_i$ is a simple and therefore (assuming positive Gromov norm) hyperbolic homology handlebody that comes equipped with an equivariant invertible cobordism from a genuine handlebody.  In \S5 we will show how to glue the $\rH_i$ together, and complete the proof of \thmref{hypthm}.

To achieve equivariance, it is convenient to work downstairs in the orbifold $\MG$ and then lift all of the constructions back up to $M$.  During this process, it should be noted that the tangles placed in the handles in $\MG$ are disjoint from the branch locus, so the lifted tangles in $M$ will be disjoint from the singular set.
\adjust[5]

%%%%%%%%%%%%%%%%%%%%%%%%%%%%%%%%%%%%%
\head{Tangles in the orbifold handles}
%%%%%%%%%%%%%%%%%%%%%%%%%%%%%%%%%%%%%

Recall from \S3 that the handlebodies $\HG_i$ in the orbifold Heegaard splitting $\MG = \HG_0\cup\HG_1$ are further decomposed into orbifold handles.  We give a procedure for inserting tangles into each of these handles.

The procedure is the same for each orbifold $1$-handle $h^1$, independent of its label $n$ which specifies the branching index:  Viewing $h^1$ as $D^2 \times D^1$ with orbifold singular set the $n$-labeled arc $\{0\} \times D^1$, insert a canonically marked $\R_8$ of index $(4,4)$, as explained in \defref{index} and shown in \figref{handletangles}b.  This tangle is simple and doubly slice by \propref{rub}, and by construction, disjoint from the branch locus.  
% Note that it has exactly four markings straddling the boundaries of each of the attaching disks $D^2\times\{\pm1\}$, 
Its preimage in $\M$ is an $\R_{8n}$ of index $(4n,4n)$ in the $1$-handle lying above $h^1$, by \remref{lifts}.  

The procedure is also the same for each orbifold $0$-handle $h^0$, independent of the label $(p,q,r)$ which specifies the stabilizer of the associated vertex $v$ of the extended branch locus $\DG$ (a triangle or cyclic group): The handle $h^0$ is a $3$-ball centered at $v$ whose boundary $2$-sphere intersects $\DG$ in three points.  These points can be taken to lie on an equator, which is thereby subdivided into a spherical triangle $\K$ as shown on the right side of  \figref{branch}.  Now insert a $\K$-molecule $\R_{\K}$ in $h^0$, as explained in \defref{kmolecule}.  Recall that the construction of $\R_{\K}$ is guided by a $2$-dimensional handlebody $H$ in $\del h^0$, whose $0$-handles (one about each vertex in $\K$) are exactly the disks where the orbifold $1$-handles are attached.  By \lemref{kmolecule}, the tangle $\R_{\K}$ is simple and doubly-slice with twelve strands, four straddling the boundary of each of these attaching disks.  We can therefore arrange for the markings on the orbifold $0$ and $1$-handles to match up where they intersect. Once again, the tangles are disjoint from the branch locus.

%%%%%%%%%%%%%%%%%%%%%%%%%%%%%%%%%%%%%
\head{Assembling the orbifold tangles \ $\TG_i\subset\HG_i$}
%%%%%%%%%%%%%%%%%%%%%%%%%%%%%%%%%%%%%

The tangles $\TG_i\subset\HG_i$ are formed by gluing together the tangles in the orbifold $0$ and $1$-handles in $\HG_i$.  Thus when we attach the orbifold $1$-handles to the orbifold $0$-handles, we are performing simple tangle sums.  The final result is thus a pair of doubly-slice simple tangles $\TG_i\subset \HG_i$ for $i=0,1$, by Propositions \ref{P:ds}a and \ref{P:gluing}a.  The exterior of $\TG_i$ in $\HG_i$, denoted $\rHG_i$, is the homology handlebody that replaces $\HG_i$. 

\begin{remark}\label{R:genus}
Each $0$-handle in $\HG_i$ contributes $12$ components to $\TG_i$.  If the $\HG_i$ have genus $g$, then there are $2(g-1)$ such $0$-handles, corresponding to the vertices of the trivalent graph whose thickening is $\HG_i$.  Thus $\TG_i$ has $24(g-1)$ components.
\end{remark} 

%%%%%%%%%%%%%%%%%%%%%%%%%%%%%%%%%%%%%
\head{Lifting the orbifold tangles to \ $\T_i\subset\H_i$}
%%%%%%%%%%%%%%%%%%%%%%%%%%%%%%%%%%%%%

When the orbifold tangles $\TG_i\subset\HG_i$ are lifted to equivariant tangles $\T_i\subset\H_i$, the picture is exquisitely embellished, as in the creation of folded paper sculptures.  Fortunately, the proof that these lifted tangles are simple and doubly-slice is essentially the same as in the orbifold case.  As noted above, the model for the $1$-handles above a degree $n$ orbifold $1$-handle is an $\R_{8n}\subset D^2\times D^1$ of index $(4n,4n)$.  For the $0$-handles, the triangle $\K$ lifts to a $1$-complex $K$ (namely the $1$-skeleton of the first barycentric subdivision of the corresponding dihedron, tetrahedron, octahedron or icosahedron) and the tangle in the ball upstairs is a $K$-molecule by \remref{kmolecule}c.  The tangles now assemble into a pair of simple doubly-slice tangles $\T_i\subset\H_i$ for $i=0,1$, whose exteriors $\rH_i$ are the homology handlebodies that replace $\H_i$. 

\begin{remark}  
Many other tangles in the $3$-ball could be used in place of the atomic tangle $\R_4$ (and its branched covers $\R_{4n}$) to construct equivariant simple doubly-slice tangles in the handlebodies of $M$.  All that is required is that this tangle should be simple and doubly-slice, and that all its cyclic branched covers (along a suitable diameter of the $3$-ball) should also be simple and doubly-slice.
\end{remark}

\setcounter{thm}{0}

%%%%%%%%%%%%%%%%%%%%%%%%%%%%%%%%%%%%%%%%%%%%%%%%%%%%%%%%%%%%%%%%%%
%%%%%%  SECTION 4 : GLUING REPLACEMENT HANDLEBODIES AND THE PROOF OF THEOREM A  %%%%%%%%%%%
%%%%%%%%%%%%%%%%%%%%%%%%%%%%%%%%%%%%%%%%%%%%%%%%%%%%%%%%%%%%%%%%%%

%%%%%%%%%%%%%%%%%%%%%%%%%%%%%%%%%%%%%%%%%%%%%%%%%%%%%%%%%%%%%%
%%%%%%%%%%%%%%%%%%%%%%%%%%%%%%%%%%%%%%%%%%%%%%%%%%%%%%%%%%%%%%
\section{Gluing replacement handlebodies and the proof of \thmref{hypthm}}\label{S:replacement}
%%%%%%%%%%%%%%%%%%%%%%%%%%%%%%%%%%%%%%%%%%%%%%%%%%%%%%%%%%%%%%
%%%%%%%%%%%%%%%%%%%%%%%%%%%%%%%%%%%%%%%%%%%%%%%%%%%%%%%%%%%%%%

Recall the statement of the main theorem:

\begin{thm}\label{T:hypthm}
Any closed $3$-manifold $\M$ equipped with 
an action of a finite group $G$ is equivariantly invertibly $\bz[\pi_1(\M)]$-homology cobordant to a hyperbolic $3\text{-manifold}$ $N$ with 
a $G$-action by isometries.   This cobordism may be chosen to be a product along a neighborhood of the singular set of the action. 
 \end{thm}

\begin{proof} % [Proof of \thmref{hypthm}]
Since there is an equivariant invertible $\bz[\pi]$-homology cobordism to a $3$-manifold with non-zero Gromov-norm by \lemref{gn}, we may assume that the manifold $M$ has positive Gromov norm.

In the last section, we constructed an equivariant handlebody decomposition $ \H_0 \cup \H_1$ of $M$ by lifting an orbifold handle decomposition $\HG_0\cup\HG_1$ of the quotient $\MG$. Then we removed neighborhoods of doubly-slice simple tangles $\TG_i\subset\HG_i$ and their lifts $\T_i\subset\H_i$ to obtain the replacement homology handlebodies $\rHG_i\subset \HG_i$ covered by $\rH_i\subset\H_i$.  The remaining step is to describe how to glue these replacement handlebodies together to build $N$, and how to create the equivariant homology cobordism that proves the theorem.

We begin by working in the quotient orbifold.  Recalling that $\TG_i$ is doubly-slice, choose a trivial tangle $\UG_i$ in $\HG_i$ that is invertibly concordant to $\TG_i$.  Removing a neighborhood of $\UG_i$ from $\HG_i$ has the effect of stabilizing $\HG_i$, i.e.\ adding $1$-handles to $\HG_i$.  For our purposes, and in particular to properly specify how to glue $\rHG_0$ and $\rHG_1$ together to form $N$, we need to make this more precise.

From this data and an ordering of the $n = 24(g-1)$ (by \remref{genus}) components of $\UG_i$ and $\TG_i$ consistent with their identification by the concordance, \lemref{longitude} gives rise to preferred decompositions
$
\del\rHG_i = \del\HG_i \cs n (S^1 \times S^1)
$
where the $k$th  torus summand is chosen so that the first $S^1$ factor is identified with the preferred longitude of the $k$th component of $\TG_i$, while the second $S^1$ factor is the meridian of that component.  From this, 
the boundary of the vertical part of the exterior of the tangle concordance in $\HG_i \times I$ acquires a preferred diffeomorphism with 
$
(\del\HG_i\times I) \cs_I (S^1 \times S^1 \times I)
$
where $\cs_I $ denotes the connected sum along a vertical arc. 
% A similar procedure gives rise to a preferred decomposition of the boundary of the tangle concordance in $\HGP \times I$.

Now we glue $\rHG_0$ to $\rHG_1$ via a diffeomorphism of their boundaries that identifies corresponding tori in such a way that their meridians and longitudes are interchanged, but that is otherwise the identity.  This yields an orbifold $\MGh$.  Because of our choice of the meridian/longitude pair, a similar construction with $\TG_i$ replaced by $\UG_i$ simply stabilizes the orbifold Heegaard splitting of $\MG$, and does not change the resulting orbifold.  Gluing the exterior of the concordance $\calc_0$ in $\HG_0 \times I$ to the exterior of $\calc_1$ in $\HG_1 \times I$ then gives an orbifold homology cobordism $\PG$ from $\MG$ to $\MGh$.  By repeating the construction with the inverses of the concordances $\calc_i$ to obtain the inverse orbifold homology cobordism $\QG$ and applying \lemref{H-cob} we see that $\PG$ is in fact an invertible cobordism. 

Finally we pass to the orbifold covers.  The cobordism that has been constructed is automatically invertible and equivariant, and so it remains to show that the orbifold cover $\Mh$ is hyperbolic, with $G$ acting by isometries, and to check the homological properties of the invertible cobordisms $P$ and $Q$.  That $\Mh$ is hyperbolic follows from the fact that the tangles $\T_i\subset\H_i$ are simple, as noted at the end of \S4, together with \propref{gluing}b.   We know $N$ is either Seifert fibered or hyperbolic. However collapsing the fibers shows that a Seifert fiber space has zero Gromov norm~\cite{gromov:norm,yano:gromov}. The map $N\to P\cup_NQ \to I\times M \to M$ would then be a degree one map contrary to the fact that $M$ has positive Gromov norm. By Theorem 2.16, we may assume that $G$ acts by isometries so $\overline{N}$ is a hyperbolic orbifold and the entire construction is equivariant.

The cobordism $P$ is obtained by gluing relative $\bz[\pi_1(M)]$ homology cobordisms between the replacement handlebodies $\rH_i$ and $H_i - U_i$. Because of the interchange of meridian and longitude, the gluing maps are compatible and so there is a map between the Mayer-Vietoris sequence (with coefficients in  $\bz[\pi_1(M)]$) for this decomposition of $M$ with the Mayer-Vietoris sequence for  $P$.  By the $5$-lemma, we see that the inclusion of $M$ into $P$ is a homology equivalence, so that $P$ is a $\bz[\pi_1(M)]$-homology cobordism. The same argument applies to the inverse cobordism $Q$. 
Since the cobordisms $P$ and $Q$ were computed by modifications to $I\times M$ away from the singular set we see that the cobordism is a product in this neighborhood.
\end{proof}

%%%%%%%%%%%%%%%%%%%%%%%%%%%%%%%%%%%%%%%%%%%%%%%%
%%%%%%%%%%% SECTION 5 : APPLICATIONS OF HYPERBOLIZATION %%%%%%%%%
%%%%%%%%%%%%%%%%%%%%%%%%%%%%%%%%%%%%%%%%%%%%%%%%

\section{Applications of hyperbolization}\label{S:corks}

This section supplies proofs for the corollaries of \thmref{hypthm} listed in the Introduction. 
%For a related application of invertible homology cobordisms to contractible manifolds, see~\cite{akbulut-ruberman:absolute}.

%%%%%%%%%%
\head{Hyperbolic $G$-corks}
%%%%%%%%%%

As mentioned in the Introduction, our original motivation was to show the existence of (effective) hyperbolic $G$-corks, and we start there. 
\setcounter{thm}{1}
\begin{cor}\label{C:hypcork}
There exist hyperbolic $G$-corks for any finite group $G$ that acts 
% effectively
 on the boundary of some compact contractible $4$-dimensional submanifold of $\br^4$.
\end{cor}
\begin{proof}
The main result of our earlier paper~\cite{auckly-kim-melvin-ruberman:equivariant-cork} asserts that if $G$ is a finite group that acts smoothly on the boundary of some compact contractible 4-dimensional submanifold of $\br^4$, then there exists a 4-manifold $X$ and a compact contractible submanifold $C\subset X$, with a $G$-action on its boundary, such that the 4-manifolds
$
X_{C,g} = (X - \inte C) \cup_g C
$
for $g\in G$ are all smoothly distinct.  We say that $C$ (with its boundary $G$-action) is an effective $G$-cork in $X$.   Now by \thmref{hypthm}, there is a $G$-equivariant invertible homology cobordism $P$ from $\del C$ to a hyperbolic homology sphere $N$, with inverse the equivariant homology cobordism $Q$.  \\[1ex]
{\bf Claim.} $C' = C \cup_{\del C} P$ is an effective $G$-cork in $X$, with boundary $\del C' = N$.  
\\[1ex]
To see this, note first that 
$$
C' = \left(C \cup_{\del C} P\right) \subset \left(C \cup_{\del C} P \cup_N Q\right) \cong C,
$$
and this induces an embedding of $C'$ in $X$.   Now the $G$-equivariance of $P$ and $Q$ implies $X_{C',g} \cong X_{C,g}$ for every $g \in G$.  Since the manifolds $X_{C,g}$ are smoothly distinct as $g$ runs over $G$, the same is true for $X_{C'\!\!,\,g}$.
\end{proof}

%%%%%%%%%%
\head{Non-extendible group actions}
%%%%%%%%%%

The hyperbolization results in~\cite{Myers2,ruberman:seifert} have been used to show that results proved about homology cobordism and knot concordance can apply to hyperbolic examples; the next application is an equivariant version of this principle.  Recall that for pairwise relatively prime $(a,b,c)$, the Brieskorn homology sphere $\Sigma(a,b,c)$  is the link of the complex singularity 
$
x^a + y^b + z^c = 0.
$
It admits a fixed point free circle action $t (x,y,z) = (t^{bc}x, t^{ac} y, t^{ab}z)$, so is Seifert fibered. If $p$ is a prime that does not divide $a$, $b$ or $c$, then the $\bz_p$ contained in $S^1$ acts freely on $\Sigma(a,b,c)$.  Work of Kwasik-Lawson~\cite{kwasik-lawson:contractible} and Anvari-Hambleton~\cite{anvari-hambleton:contractible} shows that this $\bz_p$ action does not extend over any contractible manifold with boundary $\Sigma(a,b,c)$.  (Note that while not all Brieskorn spheres bound contractible manifolds--see for instance \cite{fs:pseudofreeorbifolds,furuta:cobordism}, there are infinite families \cite{casson-harer,stern:brieskorn,fickle:contractible} that do.) We now show that \thmref{hypthm} gives many hyperbolic homology spheres with free $\bz_p$ actions satisfying this property.
\setcounter{thm}{2}
%%%%%%%
\begin{cor}
For any Brieskorn homology sphere $\Sigma(a,b,c)$ and prime $p \nmid abc$, there is a hyperbolic homology sphere $N(a,b,c)$ with a free $\bz_p$-\,action that is $\bz_p$-\,equivariantly homology cobordant to the standard action on $\Sigma(a,b,c)$, and that does not extend smoothly  over any contractible $4$-manifold that $N(a,b,c)$ might bound.   
\end{cor}
%%%%%%%%
We remark that for many choices of $(a,b,c)$, the conclusion can be strengthened to say that the action of $\bz_p$ does not extend smoothly over any {\em acyclic} $4$-manifold that $N(a,b,c)$ might bound. This is shown via the method of Kwasik and Lawson taking into account that Donaldson's definite manifolds theorem applies to non-simply connected manifolds; see~\cite{donaldson:orientation}. Kwasik and Lawson~\cite[Proposition 12]{kwasik-lawson:contractible} give a list of examples to which this method applies.  Anvari and Hambleton~\cite[Theorem 4.4]{anvari-hambleton:contractible} also give a non-extension result over acyclic manifolds under the additional assumption that the fundamental group of the boundary normally generates that of the acyclic manifold.
\begin{proof}[Proof of \coref{hyperextend}]
The condition that $p$ does not divide $abc$ implies that the action of $\bz_p$ on $\Sigma(a,b,c)$ is free. By \thmref{hypthm} there is a $\bz_p$-equivariant invertible homology cobordism $P$ from $\Sigma(a,b,c)$ with its standard $\bz_p$-\,action to a hyperbolic manifold $N =N(a,b,c)$.  By construction, the action of $\bz_p$ on $P$ is free. If the action on $N$ extends over a contractible manifold $W$, then the manifold
$
W \cup_N P
$
is a homology ball over which the $\bz_p$-\,action on $\Sigma(a,b,c)$ extends. By \propref{normal}, this homology ball is simply connected, and hence contractible, contradicting~\cite{anvari-hambleton:contractible}.
\end{proof}

%%%%%%%%%%%%%%%%%%%%%%%%%%%%%%%%%%%%%
\head{Acyclic versus contractible}
%%%%%%%%%%%%%%%%%%%%%%%%%%%%%%%%%%%%%

An important consequence of Taubes' periodic ends theorem~\cite{taubes:periodic}, observed by Akbulut, is that there are reducible homology spheres that bound homology balls, but do not bound contractible manifolds; the original example was $\Sigma(2,3,5) \cs - \Sigma(2,3,5)$. We show that one can in fact choose the homology sphere to be hyperbolic.

\begin{cor}\label{C:acyclic}
There are infinitely many hyperbolic homology spheres that bound homology balls but do not bound contractible manifolds.
\end{cor}
\begin{proof} 
We first describe how to generate one example, and then describe the modifications required to construct infinitely many. 
Let $\Sigma$ be any homology sphere that bounds a simply connected smooth $4$-manifold $X$ with non-standard  definite intersection form, for example the Poincar\'e homology sphere. Note that  $\Sigma\cs-\Sigma$ is the boundary of a homology ball, namely $I\times  (\Sigma - \inte B^3)$.  By our main theorem, there is an invertible homology cobordism $P$ 
%(with inverse Q) 
from $\Sigma\cs-\Sigma$ to a hyperbolic $3$-manifold $N$.  Furthermore, by \propref{normal} the fundamental group of $P$ is normally generated by the fundamental group of $N$.  One sees that 
$
W = I\times  (\Sigma - \inte B^3) \cup_{\Sigma\cs-\Sigma} P
$
is a homology ball with boundary $N$.

Now assume that $N$ bounds a contractible manifold $Z$. Adding a $3$-handle to $Z\cup_N P$ along the sphere separating $\Sigma$ and $-\Sigma$ results in a simply connected (since $\pi_1(P)$ is normally generated by $\pi_1(N)$) homology cobordism $V$ with boundary $\Sigma \cup -\Sigma$.  This contradicts~\cite[Proposition 1.7]{taubes:periodic}, a consequence of Taubes' periodic ends theorem.  
%Note for use in the next paragraph that while Taubes' theorem is stated for simply connected $4$-manifolds, it applies just as well to manifolds with trivial first homology. This can be shown using a holonomy perturbation as in~\cite[Theorem 1]{donaldson:orientation} to rule out bubbling off an instanton leaving an irreducible flat connection.   Perturbations of this sort in the periodic end case are discussed in some detail in~\cite[\S 6.1]{taniguchi:embeddings}.

To show that there are infinitely many distinct examples, we iterate this process.  Let $X_k$ denote the boundary connected sum of $k$ copies of $X$; its intersection form is also non-standard, as is readily verified using Elkies' criterion for diagonalizablity of a unimodular form~\cite{elkies:lattice}. Now construct an invertible homology cobordism $P_k$ between $\cs^k (\Sigma \cs -\Sigma)$ and  $\cs^k N$ by taking the connected sum of $k$ copies of $P$ along arcs running from $\Sigma \cs -\Sigma$ to $N$.  

Again applying our main theorem, there is an invertible homology cobordism $P_k'$ from $\cs^k N$  to a hyperbolic manifold $N_k$.  
Note that there is a degree one map from $N_k$ to $\cs^k N$, so that 
$$
\vol(N_k) \geq k\cdot \vol(N).
$$
It follows that the Gromov norms of the $N_k$ are unbounded, so an infinite sequence of them are distinct. Glue $P_k$ to $P_k'$ along $\cs^k N$  to obtain an invertible homology cobordism from $\cs^k (\Sigma \cs -\Sigma)$ to $N_k$. As above,  \propref{normal} says that the fundamental group of this cobordism is normally generated by $\pi_1(N_k)$.  

By construction, $N_k$ is the boundary of a homology ball. If it bounds a contractible manifold $Z$, we proceed as before. Add a single $3$-handle along a 2-sphere in $\cs^k (\Sigma \cs -\Sigma)$ that separates the $k$ copies of $\Sigma$ from the copies of $-\Sigma$ to obtain a cobordism from $\cs^k \Sigma$ to $\cs^k \Sigma \coprod N_k$. Gluing $Z$ to $N_k$ results in a simply connected homology cobordism from $\cs^k \Sigma$ to $\cs^k \Sigma$, which again contradicts Taubes' theorem.
%
%Add a copy of $P$ to $X$ to get a smooth $4$-manifold $Y$ with non-standard definite intersection form and boundary $N$.  Let $Y_k$ denote the boundary connected sum of $k$ copies of $Y$, with boundary equal to $N_k =\cs^k N$. The intersection form of $Y_k$ is also non-standard, as is readily verified using Elkies' criterion for diagonalizablity of a unimodular form~\cite{elkies:lattice}.   Repeating the argument above with $Y_k$ in place of $X$, we obtain a series of hyperbolic manifolds $M_k$ that bound acyclic $4$-manifolds but not contractible ones. By construction, there are homology cobordisms from $N_k$ to $M_k$, and hence degree one maps from $M_k$ to $N_k$. In particular the Gromov norm of $M_k$ is at least that of $N_k$, which is in turn $k$ times the (non-zero) norm of $N$. It follows that the Gromov norms of the $M_k$ are unbounded, so an infinite sequence of them are distinct.
\end{proof}

%%%%%%%%%%%%%%%%%%%%%%%%%%%%%%%%%%%%%%%%%
\head{Finite groups acting on homology spheres}\label{S:spaceforms}
%%%%%%%%%%%%%%%%%%%%%%%%%%%%%%%%%%%%%%%%%

Finally, we apply \thmref{hypthm} to questions related to the classical spherical space form problem; see the Davis-Milgram survey \cite{davis-milgram:survey} and the discussion of problem 3.37 in Kirby's problem list \cite{kirby:problems96}.  
\begin{cor}\label{C:hyphomsphere}
Any finite group that acts on a homology $3$-sphere also acts on a hyperbolic homology $3$-sphere, with equivalent fixed-point behavior $($i.e.\ the two actions are equivariantly diffeomorphic near their fixed point sets$)$.  In particular, there exist finite groups that are not subgroups of $SO(4)$ $($so by geometrization do not act freely on $S^3)$ that act freely on some hyperbolic homology $3$-sphere. \end{cor}

\noindent{\bf Remark.}
Presumably there are {\it infinitely} many finite groups that satisfy the final conclusion of the corollary.  This would follow either from Theorem 6.10 in~\cite{davis-milgram:survey} (stated there without proof) or the Generalized Riemann Hypothesis (as noted in the proof below).

\begin{proof}[Proof of \coref{hyphomsphere}]
The first part is a direct corollary of \thmref{hypthm}, replacing an action of a group $G$ on a homology sphere by an action on a hyperbolic homology sphere. The second part, constructing free actions on homology spheres by groups that cannot act freely on the $3$-sphere, requires results on the topological spherical space form problem dating to the 1970s and 1980s.  The underlying principle is that there are homotopy-theoretic (finiteness) and surgery-theoretic obstructions, depending only on $n$ modulo $8$, for a finite group $G$ to act freely on a sphere of dimension $n$. If $n$ is greater than $4$, the vanishing of these obstructions is sufficient for the existence of such an actions.  In dimension $3$, the vanishing of the obstructions implies only that $G$ acts freely on a homology $3$ sphere; see for example~\cite[Remark 8.2]{hambleton:space-forms}.

Madsen \cite{madsen:spaceforms}, Milgram~\cite{milgram:swan}, and Bentzen~\cite{bentzen:space-forms} evaluated the finiteness and surgery obstructions in number theoretic terms.  Their results show that many generalized quaternionic groups $Q(8p,q)$ (in fact {\it infinitely} many if one assumes the Generalized Riemann Hypothesis, the smallest being $Q(24,313)$~\cite{bentzen:space-forms}) act freely on spheres in dimensions $8k+3$ ($k> 0$) and hence on homology $3$-spheres.   
%However, the geometric nature of any such homology sphere was unknown.  The geometrization theorem~\cite{perelman:ricci,perelman:extinction,perelman:surgery,morgan-tian:geometrization} implies that it cannot be $S^3$; presumably one could also verify that it cannot be Seifert-fibered.  
We deduce directly from~\thmref{hypthm} that such homology spheres can be taken to be hyperbolic.
%\smargin{DR Discussion of geometric structures moved to remark below, per review comment 48.}
\end{proof}

\noindent{\bf Remark.} The constructions that produced nonlinear finite group actions on homology $3$-spheres gave in principle a description of these homology spheres as surgeries on links. The complexity of the necessary calculations in surgery theory, however, did not yield any insight as to the geometric nature of these homology spheres.  The geometrization theorem~\cite{perelman:ricci,perelman:extinction,perelman:surgery,morgan-tian:geometrization} shows that they cannot be the $3$-sphere.  It would be of interest to know if these groups can act freely on a Seifert fibered homology sphere.

%%%%%%%%%%%%%%%%%%%
%%%%%%  APPENDIX  %%%%%%
%%%%%%%%%%%%%%%%%%%

\vskip 15pt 
\noindent {\large \bf Appendix \ \   Antisymmetry of invertible cobordism of $3$-manifolds}\label{A:antisymmetric}

\vskip 10pt

We show that for closed oriented $3$-manifolds, {\it invertible cobordism} is an antisymmetric relation, and thus a partial order.\foot{The first version of this article established antisymmetry of invertible {\it homology} cobordism but the additional hypothesis is not necessary.} 
This is false in higher odd dimensions, as seen from the existence of h-cobordisms $X$ with non-trivial Whitehead torsion, for which $-X$ is the inverse cobordism; compare~\cite[Lemma 7.8]{rourke-sanderson:book}. 

First a trivial observation\,:  Any invertible cobordism between manifolds decomposes as a disjoint union of invertible cobordisms between their connected components.  Thus for our present purposes, we may implicitly assume  that all manifolds are connected.  We will also make use of the following presumably well known fact about $3$-manifolds, which we could not find explicitly in the literature.
 
%%%%%%%%%%%%%%%
\begin{lemma}\label{L:homotopy}
 Let $f\colon M \to N$ be a degree-one map of oriented $3$-manifolds that induces an isomorphism on fundamental groups. Then $f$ is a homotopy equivalence.
\end{lemma}
%%%%%%%%%%%%%%%

\begin{proof}
Let $\pi$ denote the fundamental group of the two manifolds. By Whitehead's theorem, it suffices to show that the induced map on  homology with coefficients in $\bz[\pi]$ is an isomorphism. This is clear for $H_0$ and $H_1$. If $\pi$ is infinite, then $H_3 =0$ and there is nothing to prove, and if $\pi$ is finite then this follows since $f$ has degree $1$.  This leaves dimension $2$.  Poincar\'{e} duality with $\bz[\pi]$ coefficients says that there is a commutative diagram with both horizontal arrows isomorphisms.  The diagram commutes because the degree of $f$ is $1$. 
\[
\xymatrix{
H^1(M;\bz[\pi]) \ar[r]^{\cap [M]}& H_2(M; \bz[\pi])\ar[d]^{f_*}\\
H^1(N;\bz[\pi]) \ar[r]^{\cap [N]}\ar[u]^{f^*} & H_2(N; \bz[\pi])
}
\]

For any CW complex $X$ with fundamental group $\pi$, we have 
$$
H^1(X;\bz[\pi)]) \cong H^1(\pi;\bz[\pi]) = H^1(K(\pi,1);\bz[\pi]).
$$
The standard argument for this is to create a $K(\pi,1)$ by adding cells of dimension $3$ or greater to $X$; the resulting inclusion map $X \hookrightarrow K(\pi,1)$ induces an isomorphism on $H^1$. If $f: X \to Y$ is a map inducing an isomorphism on $\pi_1$ then there is an automorphism $g$ of $\pi$  making the following diagram commute.
\[
\xymatrix{
 H^1(\pi;\bz[\pi]) \ar[r]  & H^1(Y;\bz[\pi)]) \\
 H^1(\pi;\bz[\pi]) \ar[r]\ar[u]^{g^*}  & H^1(X;\bz[\pi)]) \ar[u]_{f^*} 
}
\]
It follows that $f^*$ is an isomorphism.
\end{proof}

%%%%%%%%%%%%%%%%%%%%%%%%%
\begin{theorem}\label{T:antisymm}
Let $M$ and $N$ be closed $3$-manifolds. If there is an invertible cobordism from $M$ to $N$, and one from $N$ to $M$, then $M$ and $N$ are homeomorphic.
\end{theorem}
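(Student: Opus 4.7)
The plan is to extract degree-one maps in both directions from invertibility, deduce $\pi_1$-isomorphisms via the Hopfian property of $3$-manifold groups, and then promote these to a homeomorphism using the prime decomposition and geometric rigidity. Given an invertible cobordism $P$ from $M$ to $N$ with inverse $Q$, fix a homeomorphism $\phi\colon P \cup_N Q \cong M \times I$ with $\phi(\partial_- P) = M \times \{0\}$. Then $\phi(N)$ is a separating $3$-submanifold of $M \times I$ homologous to $M \times \{\mathrm{pt}\}$ (cobounded by $\phi(P)$), so the composition
\[
f\colon N \hookrightarrow P \cup_N Q \xrightarrow{\phi} M \times I \xrightarrow{\mathrm{proj}} M
\]
has degree one; symmetrically the hypothesized invertible cobordism $P'$ from $N$ to $M$, with inverse $Q'$ satisfying $P' \cup_M Q' \cong N \times I$, yields a degree-one map $g\colon M \to N$.

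Since a degree-one map of closed oriented $3$-manifolds is surjective on $\pi_1$ (by the lifting criterion applied to the cover corresponding to the image subgroup), $g_* \circ f_*\colon \pi_1(N) \to \pi_1(N)$ is a surjective endomorphism. Because compact $3$-manifold groups are Hopfian---being finitely generated and residually finite, by Hempel's theorem combined with geometrization---this composition is an isomorphism, forcing both $f_*$ and $g_*$ to be isomorphisms.

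Finally, I would upgrade the degree-one $\pi_1$-isomorphism to a homeomorphism using the prime decomposition. Grushko's theorem and Kneser--Milnor uniqueness match up the prime summands of $M$ and $N$ with isomorphic fundamental groups. For aspherical primes, a $\pi_1$-isomorphism is a homotopy equivalence by $K(\pi,1)$-theory, and hence a homeomorphism via geometrization (Mostow rigidity in the hyperbolic case, and standard rigidity for the other geometries); for $S^1\times S^2$ factors, $\pi_1 \cong \bz$ uniquely determines the manifold. The most delicate case is that of spherical space forms, where degree-one $\pi_1$-iso yields only a homotopy equivalence---not a homeomorphism, as the pair $L(7,1), L(7,2)$ shows---and here lies the main obstacle. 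To close the argument, I would exploit the full invertibility data beyond the induced maps: the product structures $P \cup_N Q \cong M \times I$ and $P' \cup_M Q' \cong N \times I$ impose Whitehead torsion constraints that force matched spherical primes to be simple-homotopy equivalent, and by the classical classification of spherical space forms this upgrades to a homeomorphism.
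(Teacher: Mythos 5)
Your opening moves match the paper exactly: you extract degree-one maps $f\colon N\to M$ and $g\colon M\to N$ from the two product structures, observe that degree-one maps of closed oriented manifolds are $\pi_1$-surjective, and then use the Hopfian property of $3$-manifold groups to promote $f_*$ and $g_*$ to isomorphisms. All of that is correct and is what the paper does.

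You then diverge, and this is where the gap lies. The paper's next step is to prove that $P$ is an \emph{h-cobordism}: from the van Kampen pushout it deduces that the inclusion-induced maps $\pi_1(N)\to\pi_1(P)$, $\pi_1(M)\to\pi_1(P)$, etc.\ are all isomorphisms; it then runs a Mayer--Vietoris argument with $\bz[\pi]$-coefficients to get that those inclusions are $\bz[\pi]$-homology isomorphisms, hence homotopy equivalences by Whitehead's theorem. Only then does it invoke Kwasik--Schultz to kill the Whitehead torsion of $(P,M)$ and $(P,N)$, and Turaev to upgrade the simple homotopy equivalence $M\simeq N$ to a homeomorphism. Your proposal skips the step of showing $P$ is an h-cobordism and tries instead to go straight to a homeomorphism via prime decomposition. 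For the aspherical and $S^2\times S^1$ summands this could be made to work (a $\pi_1$-isomorphism between closed aspherical $3$-manifolds is a homotopy equivalence, and then geometrization plus Mostow/Waldhausen rigidity gives a homeomorphism), but you correctly identify that the spherical primes are the obstruction. Your proposed fix --- ``the product structures impose Whitehead torsion constraints that force matched spherical primes to be simple-homotopy equivalent'' --- is not a proof. For a Whitehead torsion argument to apply you would first need to know that the relevant pieces of $P$ and $Q$ are h-cobordisms (torsion is only defined on homotopy-equivalent pairs), and establishing that is exactly the nontrivial content you have omitted. The unknotting-type identity $P\cup_N Q\cong M\times I$ by itself does not produce a torsion equation until you know $N\hookrightarrow P$ and $N\hookrightarrow Q$ are homotopy equivalences. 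So the proposal as written leaves the decisive case (lens spaces being the standard worry) unresolved, whereas the paper closes it by proving the h-cobordism statement first and then appealing to Kwasik--Schultz and Turaev.

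One additional caveat: even granting the h-cobordism statement, matching up the geometric prime summands of $M$ and $N$ via Grushko and Kneser--Milnor and transferring the degree-one maps summand-by-summand requires more care than your sketch indicates; the paper sidesteps all of this by working with $M$ and $N$ whole and invoking Turaev's classification of closed geometric $3$-manifolds up to simple homotopy, which is tailored precisely to this situation.
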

%%%%%%%%%%%%%%%%%%%%%%%%%

\begin{proof}
Let $P$ be the cobordism from $M$ to $N$, and $Q$ be the inverse cobordism from $N$ to $M$, so that $P \cup_N Q = M \times I$.  This gives a map $f\colon N\to M$, the composition of the inclusion $N\hookrightarrow M\times I$ followed by the projection $M\times I \to M$.  There is also another pair of cobordisms $Q'$ from $N$ to $M$ and $P'$ from $M$ to $N$, so that  $Q'  \cup_M P' = N \times I$, and this gives a map $g\colon M\to N$.
% There are collapse maps $p_*\colon M\times I \to M \times 0$ and likewise for $N$; these induce maps $f\colon  N \to M$ and and $g\colon  M \to N$.  
Both of these maps have degree one, so their induced maps on $\pi_1$ are surjective. Thus the composition 
$
g_* \circ f_* % = (g \circ f)_*
\colon  \pi_1(N) \to \pi_1(N)
$
is surjective.  But 3-manifold groups are Hopfian~\cite{aschenbrenner-friedl-wilton:book}, which means that in fact this composition is an isomorphism. It follows that $f_*$ is injective, so it is an isomorphism. We write $\pi$ for $\pi_1(M) \cong \pi_1(N)$.

Computing the fundamental group of $M \times I = P \cup_N Q$ by van Kampen's theorem yields a pushout diagram:
\[
\label{pushout}
\begin{gathered}
\xymatrix{
& \pi_1(Q) \ar[dr]^{j_Q}&&\\
\pi_1(N) 
\ar[rr]^{f_*} % this line replaced \ar@/_6pc/[rrr]^{f_*} 
\ar[ur]^{i_Q}
\ar[dr]_{i_P} && 
\pi_1(M) &\hskip-25pt\cong \pi_1(M\times I) 
% this line replaced \pi_1(M \times I)\ \ar[r]^{\ p_*} &\ \pi_1(M)
\\
& \pi_1(P) \ar[ur]_{j_P}&&
}
\end{gathered}
\tag{$*$}
\]
Since $f_*$ is an isomorphism, $i_P$ and $i_Q$ are injective, and $j_P$ and $j_Q$ are surjective. A standard result about pushouts~\cite[Theorem IV.2.6]{lyndon-schupp} says that in fact $j_P$ and $j_Q$ are injective, so all of these maps are isomorphisms. In particular, \lemref{homotopy} implies that $f$ is a homotopy equivalence. Denote the fundamental group of all of these manifolds by $\pi$.

Now we claim that $i_P$ and $i_Q$ are homotopy equivalences, or equivalently, induce isomorphisms on homology with coefficients in $\bz[\pi]$. Poincar\'e duality, as in \lemref{homotopy}, implies that the inclusions of $M$ into $P$ and $Q$ are homotopy equivalences, and so both manifolds are h-cobordisms. To see this, we use the Mayer-Vietoris sequence, a portion of which is drawn in the following diagram; the twisted coefficients in $\bz[\pi]$ are understood.
\begin{equation*}\label{mv}
%\begin{gathered}
\xymatrix@R-18pt{
&& H_k(Q) \ar[dr]^{j_Q}&&\\
\cdots\ \ar[r] & H_k(N) 
 % this line replaced \ar@/_6pc/[rrr]^{f_*} 
\ar[ur]^{i_Q}\ar[dr]_{i_P}&  \bigoplus & 
H_k(M) &\hskip-25pt\cong H_k(M\times I) \ar[r] & \ \cdots % this line replaced H_k(M \times I)\ \ar[r]^{\ p_*} &\ H_k(M)
\\
&& H_k(P) \ar[ur]_{j_P}&&
}
%\end{gathered}
\end{equation*}
Since $j_Q \circ i_Q \simeq f$, we see that $i_Q$ is injective, and $j_Q$ is surjective, so the Mayer-Vietoris sequence splits into a sum of short exact sequences.  An easy diagram chase shows that $i_Q$, $i_P$, $j_Q$, and $j_P$ are all isomorphisms. In particular, $P$ (and all the other cobordisms) is an h-cobordism.  A theorem of Kwasik-Schultz~\cite{kwasik-schultz:torsion} says that the Whitehead torsion of $P$ must vanish. In particular, all of the inclusion maps are simple homotopy equivalences, and so $f$ is a simple homotopy equivalence. Finally, Turaev proved~\cite{turaev:geometric} that this implies that $f$ is homotopic to a homeomorphism.\end{proof}

We remark that if $M$ and $N$ are hyperbolic manifolds, then there is an alternate (and perhaps simpler) route to this conclusion, based on the Gromov-Thurston proof of Mostow's rigidity theorem~\cite[Theorem 6.4]{thurston:notes2002} as well as~\cite{gromov:norm,haagerup-munkholm:simplices}. This proof implies directly that if there are degree one maps from $M$ to $N$ and from $N$ to $M$, then $M$ and $N$ are homeomorphic.  Such maps are constructed in the first paragraph of the proof above; in this setting one doesn't have to establish that the maps are homotopy equivalences. 
\begin{remark*}
As the proof of \thmref{antisymm} indicates, the study of invertible cobordisms has a large overlap with the study of degree one maps. For instance, one says that $M$ dominates $N$ if there is a map of non-zero degree from $M$ to $N$. A recent result of Liu~\cite{liu:domination} says that a given closed $3$-manifold dominates only finitely many other $3$-manifolds. It follows {\em a fortiori} that the same result holds for the ordering coming from invertible cobordism. 
\end{remark*}

Finally, we establish the following general property of maps induced on the fundamental group of invertible cobordisms that was used in Corollaries \ref{C:hyperextend} and \ref{C:acyclic}. %\begin{propstar}

\begin{proposition}\label{P:normal}
Let $P$ be an invertible cobordism from $M$ to $N$, with inverse cobordism $Q$. Then the image of the map $i_P$ induced by the inclusion of $N$ into $P$ normally generates $\pi_1(P)$, and likewise the image of $i_Q$ normally generates $\pi_1(Q)$.
\end{proposition}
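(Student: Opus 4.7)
The plan is to prove the equivalent dual statement: every group homomorphism $\rho\colon \pi_1(P) \to H$ that vanishes on the image of $(i_P)_*\colon \pi_1(N) \to \pi_1(P)$ must itself be trivial. The image of $(i_P)_*$ normally generates $\pi_1(P)$ if and only if no nontrivial $\rho$ of this form exists, so the two formulations are interchangeable.

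First I would apply the Seifert--van Kampen theorem to the decomposition $P \cup_N Q \cong M \times I$ to obtain the pushout identification
$$
\pi_1(M) \ \cong\ \pi_1(M \times I) \ \cong\ \pi_1(P) \ast_{\pi_1(N)} \pi_1(Q).
$$
Given any $\rho$ as above, the hypothesis that $\rho$ is trivial on the image of $\pi_1(N)$ lets me extend it across the amalgamation by declaring it to be the trivial homomorphism on $\pi_1(Q)$; the universal property of the pushout then produces a well-defined homomorphism $\hat\rho\colon \pi_1(M \times I) \to H$ whose restriction to $\pi_1(P)$ recovers $\rho$.

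The key observation is that the \emph{other} boundary copy of $M$ sits inside $Q$: writing $M \times I = P \cup_N Q$ with $P$ on the bottom, the top boundary $M \times \{1\}$ lies in $Q$. Since $M \times \{1\} \hookrightarrow M \times I$ is a homotopy equivalence and factors through $Q$, the induced map $\pi_1(Q) \to \pi_1(M \times I)$ is surjective. Because $\hat\rho$ is trivial on $\pi_1(Q)$ by construction, it must be trivial on all of $\pi_1(M \times I)$, forcing $\rho$ itself to be trivial.

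The only point I anticipate needing care with is basepoint bookkeeping, since the natural basepoint on $N$ does not lie on either boundary copy of $M$; this is handled routinely by transferring basepoints along paths in $P$ or $Q$ and is not a conceptual obstacle. The parallel statement that the image of $(i_Q)_*$ normally generates $\pi_1(Q)$ follows by the symmetric argument: interchange the roles of $P$ and $Q$ and use that the bottom copy $M \times \{0\} \subset P$ realizes a homotopy equivalence $M \hookrightarrow M \times I$ factoring through $P$.
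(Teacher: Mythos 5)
Your proof is correct, and its overall shape matches the paper's: apply Seifert--van Kampen to $P \cup_N Q \cong M \times I$, use the universal property of the pushout to extend a homomorphism that kills the image of $\pi_1(N)$, and then invoke a surjectivity fact to force triviality. The difference lies in which surjectivity fact is used and how the target is chosen. The paper maps into the explicit group $G_P * G_Q$ (the free product of the two quotients by the normal closures) and concludes that the extended map is trivial because $f_*\colon \pi_1(N) \to \pi_1(M\times I)$ is onto, which it derives from degree theory (the degree-one map $f\colon N \to M$ built from the inclusion $N \hookrightarrow M\times I$). You instead allow an arbitrary target $H$ and conclude triviality because $j_Q\colon \pi_1(Q) \to \pi_1(M\times I)$ is onto, which follows from the purely homotopy-theoretic fact that $M\times\{1\}\hookrightarrow M\times I$ is a homotopy equivalence factoring through $Q$. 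Since $f_* = j_Q \circ i_Q$, your surjectivity fact is formally weaker than the paper's, and it is also more elementary --- no degree-one argument, no orientation or closedness hypotheses on $M$ --- so your version is slightly more self-contained. The arbitrary-target formulation and the $G_P * G_Q$ formulation are equivalent; the latter just packages the universal case, which is why the paper can handle $G_P$ and $G_Q$ at once while you appeal to symmetry for the second half.
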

%\end{propstar}

\begin{proof}
We adopt the notation of the previous proof.  It suffices to show that the quotient groups
$$
G_P \ = \ \pi_1(P)/\langle \im(i_P) \rangle  
\qquad\text{and}\qquad     
G_Q \ =  \ \pi_1(Q)/\langle \im(i_Q) \rangle 
$$
\vskip-10pt 
\noindent are trivial, where $\langle \  \rangle$ denotes normal closure. The natural maps $\pi_1(P) \overset{k_P}{\to} G_P * G_Q \overset{k_Q}{\from}\pi_1(Q)$ induce a unique map $h:\pi_1(M) \to G_P * G_Q$ with $h\circ j_P = k_P$ and $h\circ j_Q = k_Q$, extending the diagram  $(*)$ above, which must be trivial since $f_*$ is onto.  This forces $k_P$ and $k_Q$ to be trivial, which implies that $G_P$ and $G_Q$ are trivial.
\end{proof}

%%%%%%%%%%%%%%
\bibliography{hyperbolic}
%%%%%%%%%%%%%%

\end{document}